\numberwithin{equation}{section}
\def\P{\mathcal P}
\newcommand{\comments}[1]{}
\newcommand{\be}{\begin{equation}}
\newcommand{\ee}{\end{equation}}
\newcommand{\ba}{\begin{align}}
\newcommand{\ea}{\end{align}}
\newtheorem{theorem}{Theorem}[section]
\newtheorem{corollary}[theorem]{Corollary}
\newtheorem{lemma}{Lemma}[section]
\DeclareMathOperator{\even}{even}
\DeclareMathOperator{\odd}{odd}
{\begin{list}{}{%
\settowidth{\labelwidth}{\textsf{{\it #1.}}}%
\setlength{\labelsep}{4mm}%
\setlength{\leftmargin}{\labelwidth}%
\addtolength{\leftmargin}{\labelsep}%
}}%
{\end{list}}
\def\Z{\mathbb Z}
\def\S{\mathcal S}
\def\om{\omega}
\def\midd{\, : \,}
{\begin{list}{}{%
\settowidth{\labelwidth}{\textsf{{\it #1.}}}%
\setlength{\labelsep}{2mm}%
\setlength{\leftmargin}{\labelwidth}%
\addtolength{\leftmargin}{\labelsep}%
\addtolength{\leftmargin}{4mm}%
\setlength{\itemsep}{6pt}%
\setlength{\listparindent}{0pt}%
\setlength{\topsep}{3pt}%
}}%
\title[Integer group determinants]{ Integer group determinants for small groups}
\author[C. Pinner]{Christopher Pinner}
\address{ Department of Mathematics\\
         Kansas State University\\
         Manhattan, KS 66506, USA}
\email{pinner@math.ksu.edu}
\thanks{The second  author thanks the University of Edinburgh for the invitation 
 to visit,  and the Edinburgh Mathematical Society for its financial  support.}
\author[C. Smyth]{Christopher Smyth}
\address{School of Mathematics and Maxwell Institute for Mathematical Sciences\\
University of Edinburgh\\
Edinburgh EH9 3FD\\
Scotland, UK}
\email{c.smyth@ed.ac.uk}
\keywords{Lind-Lehmer constant, Mahler measure, group determinant, dihedral 
group, dicyclic group,  circulant determinant}
\subjclass[2010]{Primary: 11R06; Secondary: 11B83, 11C08,  11G50, 11R09, 11T22, 
43A40}
\date{\today}
\begin{document}

\begin{abstract}
For every group of order at most 14 we determine  the values taken  by its
group determinant when its variables are integers.

\end{abstract}

\maketitle

\section{Introduction}\label{secIntroduction}
For a finite group $G=\{g_1,\ldots ,g_n\}$ of order $n,$ we assign a  variable 
$x_g$ for each element $g\in G$ and define its {\it group determinant}
$\mathscr{D}_G(x_{g_1},\ldots ,x_{g_n})$ to be the determinant of the $n\times 
n$ matrix whose $(i,j)$th entry is $x_{g_i g_j^{-1}}$.
In the case of the cyclic group of order $n$, the group determinant becomes an 
$n\times n$ {\it circulant determinant}, where  
each row is obtained from the previous one by a cyclic shift one step to the 
right. At the meeting of the American Mathematical Society in Hayward, 
California, in April 1977,  Olga Taussky-Todd asked which integers could be obtained 
as an $n\times n$ circulant 
determinant when the entries are all integers.  Of course we can ask for a 
complete description of the group determinants over the integers for any group $G$, 
not just for the cyclic groups. Thus our problem is to determine the set
\[
  \S(G)=\{  \mathscr{D}_G(x_{g_1},\ldots ,x_{g_n})\midd x_{g_1},\ldots ,x_{g_n}\in\Z \}.                                                                          
   \]
For the additive cyclic group $\mathbb Z_n$ of order $n$, 
  Laquer \cite{Laquer} and Newman \cite{Newman1,Newman2} gave 
divisibility conditions on the integers that can be group determinants, as well as sets of 
achievable values; for example any integer coprime to $n$ or a multiple of 
$n^2$ 
will be a group determinant, if $m$ is a determinant then so is $-m$, and if 
$p\mid m$ and $p^{\alpha}\parallel n$ then $p^{\alpha+1}\mid m$. Conditions like 
these enabled them to obtain a complete description  of the values for certain 
cyclic groups. 
For example   Laquer \cite{Laquer} and Newman 
\cite{Newman1} showed that for a prime $p$
\be \label{Zp}  \S(\Z_p)=\{p^a m_p\midd a=0, a\geq 2\}, \ee
while for  $p$ an odd prime, Laquer \cite{Laquer} showed that 
\be \label{Z2p}  \S(\Z_{2p})=\{2^a p^b m_{2p}\midd  a=0, a\geq 2, \;\; b=0, 
 b\geq 2\}. \ee
Newman \cite{Newman2} determined $\S(\Z_9)$ as
\be \label{Z9}   \S(\Z_9)=\{3^am_3\midd  a=0, a\geq 3\},     \ee
with upper and lower set  inclusions for general $\mathbb Z_{p^2}$.  In the above, and henceforth, $m_t$ denotes an arbitrary integer coprime to $t$.

For a polynomial $F(x_1,\ldots ,x_r)$ in $\mathbb Z [x_1,\ldots ,x_r]$, the 
traditional logarithmic Mahler measure $m(F)$ can  be defined by
$$ m(F)=\log M(F) = \int_0^1 \cdots \int_0^1 \log |F(e^{2\pi i x_1},\ldots , 
e^{2\pi i x_r})| dx_1 \cdots dx_r. $$
In 2005 Lind \cite{Lind} viewed the traditional Mahler measure as a measure on 
the circle group $(\mathbb R /\mathbb Z)^r$ and generalised the concept to an 
arbitrary compact abelian group. In particular for a finite group
\be \label{finiteG}  G=\mathbb Z_{n_1}\times \cdots \times \mathbb Z_{n_r} \ee
one can define the logarithmic measure of an $F(x_1,\ldots ,x_r)$ in $\mathbb Z 
[x_1,\ldots ,x_r]$ relative to $G$ to be $$m_G(f)=\frac{1}{|G|} \log |M_G(F)|, 
$$ where
$$ M_G(F)=\prod_{j_1=1}^{n_1} \cdots \prod_{j_r=1}^{n_r}F(\om_{n_1}^{j_1},\ldots 
,\om_{n_r}^{j_r}),\;\; \;\;\; \om_n:=e^{2\pi i/n}. $$

Curiously, the Lind variant of the Mahler measure for $\Z_n$ had essentially appeared in a 1916 paper of Pierce \cite{Pierce}, and also in the
famous paper of Lehmer \cite{Lehmer}, in the form $\Delta_n:=\prod_{i=1}^r(\alpha_i^n-1)=(-1)^{rn} M_{\Z_n}(F)$, for a monic integer one-variable polynomial $F$ with roots $\alpha_1,\ldots ,\alpha_r$.

As observed by Dedekind, the group of characters $\hat{G}$ of a finite abelian 
group $G$ can be used to factor its group determinant as
\be \label{abelianfactor} \mathscr{D}_G(x_{g_1},\ldots ,x_{g_n})=\prod_{\chi \in 
\hat{G}} \left(\chi(g_1)x_{g_1}+\cdots + \chi(g_n)x_{g_n}\right). \ee
On making the characters  explicit, it is readily seen that for a group $G$ of the form 
\eqref{finiteG} we have 
\be \label{connection}  \mathscr{D}_G(a_{g_1},\ldots ,a_{g_n})= M_G(F), \ee
where
\be \label{defF}  F(x_1,\ldots ,x_r) =\sum_{g=(t_1,\ldots t_r)\in G} a_g 
x_1^{t_1}\cdots x_r^{t_r}, \ee
a connection observed by Vipismakul \cite{Cid1} in his thesis. Of course any 
polynomial in $\mathbb Z[x_1,\ldots ,x_r]$ can be reduced to \eqref{defF}  by 
working in the ring $\mathbb Z[x_1,\ldots ,x_r]/\langle x_1^{n_1}-1,\ldots 
,x_r^{n_r}-1\rangle.$

Kaiblinger \cite{Norbert2} used the Lind measure approach to obtain 
\be \label{Z4}   \S(\mathbb Z_4)=\{2^a m_2\midd  a=0, a\geq 4\} \ee
and 
\be \label{Z8}   \S(\mathbb Z_8)=\{2^a m_2 \midd  a=0, a\geq 5\},    \ee
with upper and lower set inclusions for the other $\mathbb Z_{2^k}$. Defining 
$\lambda(G)$ to be the smallest non-trivial determinant value
$$ \lambda(G):= \min \{ |s|\midd s\in\S(G),\; s\neq 0, \pm 1\}, $$
Kaiblinger  \cite{Norbert}  obtained $\lambda (\mathbb Z_n)$ when $420\nmid n$; 
 this was extended to all $n$ with $892371480 \nmid n$
by Pigno and Pinner \cite{Pigno1}. Values of $\lambda(G)$ for non-cyclic abelian $G$  were 
considered in \cite{dilum,pgroups,Cid2, Stian}.

As explored in Boerkoel and Pinner \cite{dihedral}, the connection \eqref{connection} between Lind 
measures and group determinants suggests a way to extend the concept of Lind 
measure to 
non-abelian finite groups, and to measures on (not necessarily commutative) polynomial rings modulo 
appropriate group relations. See Dasbach and Lal\'\i n\cite{Lalin} for another approach. 
As observed 
by Frobenius, see for example \cite{Formanek,Conrad}, 
the counterpart to \eqref{abelianfactor} for a non-abelian group will involve 
non-linear factors and the set of irreducible
representations $\hat{G}$ for $G$. Specifically,
$$ \mathscr{D}_G(x_{g_1},\ldots ,x_{g_n})=\prod_{\rho\in \hat{G}} \det\left( 
\sum_{g\in G} x_g \rho(g)\right)^{\deg(\rho)}. $$
For example, for the dihedral
group $G=D_{2n}$ of order $2n$, one can define the measure $M_G(F)$ of  an $F$ in 
$ \mathbb Z[x,y]/\langle x^n-1,y^2-1, xy-yx^{-1}\rangle,$ reduced to the form
$$ F(x)=f(x)+yg(x),\;\; \;\; f(x)=\sum_{j=0}^{n-1} a_j 
x^j,\;\;\;g(x)=\sum_{j=0}^{n-1} b_j x^j, $$ 
by
\[
  M_G( F)  =\mathscr{D}_G(a_0,\ldots ,a_{n-1},b_0,\ldots 
,b_{n-1}). 
\]
This was shown in \cite[Section 2]{dihedral} to equal 
\be\label{E-D}
M_{\mathbb Z_n}\left(f(x)f(x^{-1})-g(x)g(x^{-1})\right),
\ee
and was used in \cite{dihedral} to determine $\S(D_{2p})$ for 
$p$ an odd prime as 
\be \label{D2p}  \S(D_{2p})=\{2^a p^b m_{2p}\midd  a=0,  a\geq 2,\;\; b=0, 
 b\geq 3\},   \ee
(which includes $S_3$ under the guise of $D_6$). Also 
$$\S(D_{4p})=\S_{\odd}(D_{4p})\cup \S_{\even}(D_{4p}), $$
 where
\be \S_{\odd}(D_{4p})=\{m\equiv 1 \bmod 4\midd p\nmid m, p^3\mid m\}.
\ee
and
\be \label{D4p}  \S_{\even}(D_{4p})=\{2^ap^b m_{2p}\midd  a=4, a\geq 6,\;\; b=0, 
 b\geq 3\}.  \ee
For $G=\mathbb Z_2 \times \mathbb Z_2$, viewed as $D_4:$ 
\be \label{Z2Z2} \S(\Z_2\times\Z_2)=\{4m+1, \;\; 2^4(2m+1),\;\; 2^6m\midd m\in \mathbb Z\}, \ee
for  $G=D_8$:
\be \label{D8}  \S(D_8)=\{4m+1,\;\; 2^8m \midd m\in \mathbb Z\},   \ee
for  $G=D_{16}$:
\be \label{D16}   \S(D_{16})=\{4m+1,\;\; 2^{10}m \midd m\in \mathbb Z\},  \ee
with upper and lower set inclusions for the other $D_{2^k}$. For
$G=D_{2p^2}$ when  $p=3,5$ or $7$:
$$ S(D_{2p^2})=\{2^a p^b m_{2p} \midd  a=0, a\geq 2,\;\; b=0,
b\geq 5\}. $$
Also, the value of $\lambda(D_k)$ was determined for $k<3.79 \times 10^{47}$ in \cite{dihedral}.

Many of the small groups are of one of the above forms. Indeed for the groups of 
order at most 14 this just leaves out the groups
$G=Q_8,\mathbb Z_2 \times\mathbb Z_4$, $\mathbb Z_2^3$, $\mathbb Z_3 \times 
\mathbb Z_3$, $A_4, Q_{12}$, $\mathbb Z_{12}$  and $\mathbb Z_6 \times \mathbb 
Z_2$, where $Q_{4n}$ denotes the dicyclic group of order $4n$.
Our goal here is to determine $\S(G)$ for 
these  remaining groups $G$. As we shall see, for example for $G=\mathbb Z_6\times \mathbb Z_2$, 
these can become complicated 
very quickly, so developing a general theory for dealing with all finite groups is probably not feasible.
While it is known \cite{Formanek}  that the group determinant polynomial, $\mathscr{D}_G(x_{g_1},\ldots ,x_{g_n})$, determines the group, it remains open whether the set of integer values taken, $\S(G)$, also determines the group.

We shall make frequent use of the multiplication property 
\be \label{mult} \mathscr{D}_G(a_{g_1},\ldots 
,a_{g_n})\mathscr{D}_G(b_{g_1},\ldots ,b_{g_n})=\mathscr{D}_G(c_{g_1},\ldots 
,c_{g_n}),\;\; c_g:=\sum_{uv=g}a_ub_v, \ee
corresponding to multiplication $\left(\sum_{g\in G} a_g g 
\right)\left(\sum_{g\in G}  b_g g \right)= \sum_{g\in G}  c_g g$ in 
$\mathbb Z[G]$  (or multiplication and reduction of polynomials subject to the 
relations). Thus $\S(G)$ is a semigroup.

We shall work interchangeably with the group determinants 
$\mathscr{D}_G(x_{g_1},\ldots ,x_{g_n})$ and the polynomial measures 
$M_G(F)$. We begin by expressing the group determinant for the dicyclic group 
$Q_{4n}$ as a $\mathbb Z_{2n}$  Lind measure of an 
associated polynomial.

\section{ Dicyclic Groups}

We write the dicyclic group of order $4n$ in the form 
$$ Q_{4n} =\langle a,b \; :\; a^{2n}=1,b^2=a^n,ab=ba^{-1}\rangle, $$
and order the elements $1,a,a^2,\ldots ,a^{2n-1},b,ba,\ldots ,ba^{2n-1}$.

Our polynomial measures will be defined on $\mathbb Z[x,y]/\langle x^{2n}-1, 
y^2-x^n,xy-yx^{2n-1} \rangle,$ where we can assume that $F$ in $\mathbb Z[x,y]$ 
has been reduced to the form
\be \label{reducedform}  F(x,y)=f(x)+yg(x), \;\;\;\; \; f(x)=\sum_{j=0}^{2n-1} 
a_j x^j,\;\;\; g(x)=\sum_{j=0}^{2n-1} b_j x^j. \ee
The case $n=2$ gives us the classical quaternion group
$$ Q_8=\{1,-1,i,-i,j,-j,k,-k\},\;\;\; i^2=j^2=k^2=ijk=-1, $$
under the correspondence $(1,a,a^2,a^3,b,ba,ba^2,ba^3)=(1,i,-1-i,j,-k,-j,k)$.

Our determinant $\mathscr{D}_{Q_{4n}}(a_0,\ldots ,a_{2n-1},b_0,\ldots 
,b_{2n-1})$ will have four linear factors, corresponding to the characters $\chi 
(a)=1$ and $\chi(b)=\pm 1$,  and $\chi(a)=-1$ with
$\chi(b)=\pm 1$ if $n$ is even and $\chi (b)=\pm i$ if $n$ is odd,
\begin{align*} 
\Big(f(1)+g(1)\Big)\Big(f(1)-g(1)\Big)\Big(f(-1)+g(-1)\Big)\Big(f(-1)-g(-1)\Big)
, & \;\;\; \text{ $n$ even,} \\
\Big(f(1)+g(1)\Big)\Big(f(1)-g(1)\Big)\Big(f(-1)+ig(-1)\Big)\Big(f(-1)-ig(-1)\Big), & \;\;\; \text{ $n$ odd.} 
\end{align*}
For the remaining complex $2n$th roots of unity  $\om=\om_{2n}^j$, $1\leq j \leq 
n-1$,
 (complex conjugates give the same factors) we have $n-1$ two-dimensional 
representations
$$    \rho (a)=\left( \begin{matrix} \om & 0 \\ 0 & 
\om^{-1}\end{matrix}\right),\;\;\;\rho(b) = \left( \begin{matrix} 0 & \om^n \\ 1 
& 0 \end{matrix} \right),    $$
leading to  the squares of $n-1$ quadratic factors
\begin{align*}  
\det\left( \rho\left( \sum_{j=0}^{2n-1}  a_j a^j + \sum_{j=0}^{2n-1} b_j  b 
a^j\right)\right)  
 & = \det \left( \begin{matrix} f(\omega) & \om^n g(\om^{-1}) \\ g(\om) & f(\om^{-1}) 
\end{matrix} \right) \\
 & =f(\om)f(\om^{-1}) -\om^n g(\om)g(\om^{-1}). \end{align*}
Hence we can write 
$$ \mathscr{D}_{Q_{4n}}(a_0,\ldots ,a_{2n-1},b_0,\ldots 
,b_{2n-1})=\prod_{j=0}^{2n-1} (f(\om_{2n}^j)f(\om_{2n}^{-j}) 
-\om_{2n}^{jn}g(\om_{2n}^j)g(\om_{2n}^{-j}). $$
We take this to be the dicyclic  measure of an $F(x,y)$ in $\mathbb Z[x,y]$, 
reduced to the form  \eqref{reducedform},
\be\label{E-Q} M_{Q_{4n}}(F) = M_{\mathbb Z_{2n}}\Big( f(x)f(x^{-1})-x^n g(x)g(x^{-1})\Big). 
\ee
We observe for future reference that
\begin{align}\label{E-ggff}
M_{\mathbb Z_{2n}}\Big( g(x)g(x^{-1})-x^n& f(x)f(x^{-1})\Big) \nonumber\\ 
&=(-1)^nM_{\mathbb Z_{2n}}\Big( f(x)f(x^{-1})-x^n g(x)g(x^{-1})\Big),
\end{align}
so that $\S(Q_{4n})=-\S(Q_{4n})$ when $n$ is odd.

\section{Groups of order 8}\label{8gps}

In this section we determine $\S(G)$ for the 
five  groups of order eight: $G= \mathbb Z_8$, $D_8$,  $Q_8,$ $\mathbb Z_4 \times\mathbb 
Z_2$ and $\Z_2^3$.

As mentioned in the introduction, $\S(G)$ is already known for $G=\mathbb 
Z_8$ and $D_8$, namely
 $$ \S(\Z_8)=\{2m+1 \text{ and } 32m \midd m\in \mathbb Z\} $$
and 
$$\S(D_8)= \{4m+1  \text{ and } 2^8m \midd  m\in \mathbb Z\}. $$

For the groups 
 $G=D_8, Q_8$ and $\mathbb Z_4 \times \mathbb Z_2$, the  group determinants 
correspond to  Lind-Mahler measures on the two-variable polynomials $F(x,y)\in 
\mathbb Z[x,y]$  reduced to 
$$ F(x,y)=f(x)+yg(x), \text{ where } f(x) = \sum_{j=0}^3 a_jx^j,\;\; g(x)=\sum_{j=0}^3 
b_jx^j. $$
These are given in terms of Lind-Mahler measures of cyclic groups by \eqref{E-D} for $D_8$,
by \eqref{E-Q} for $Q_8$, and by
\be\label{E-Z42}
M_{\Z_4\times\Z_2}(F)=M_{\Z_4}(f(x)+g(x))\cdot M_{\Z_4}(f(x)-g(x))
\ee
for $\Z_4\times\Z_2$.

For $G=\mathbb Z_2^3$ the determinants correspond to measures of polynomials in 
$\mathbb Z[x,y,z]$, reducible mod $\langle x^2-1,y^2-1,z^2-1\rangle$  to 
$$ F(x,y,z)=\sum_{i,j,k\in \{0,1\}} a_{i,j,k}x^iy^jz^k \in \mathbb Z[x,y,z],\;\;\; M_G(F)=\prod_{x,y,z=\pm 1} F(x,y,z). $$

\begin{theorem} \label{Q8} We have
$$\S(\Z_4\times\Z_2)= \{8m+1  \text{ and } 2^8m \midd  m\in \mathbb Z\},$$
$$\S(Q_8)= \S(\Z_4\times\Z_2)\cup \{(8m-3)p^2\midd m\in\Z, \; p \equiv 3\bmod 4 \text{ prime}\},$$
and
$$\S(\Z_2^3)=\{8m+1 \text{ and }2^8(4m+1)  \text{ and } 2^{12}m \midd m\in\Z\}. $$ 
\end{theorem}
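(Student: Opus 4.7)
My plan is to handle each of the three groups in turn, separating odd and even values and using a common strategy: factor $M_G$ over an index-$2$ subgroup and then bring in explicit quadratic formulas for the smaller measure.

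\smallskip

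\textbf{The group $\Z_4\times\Z_2$.}  By \eqref{E-Z42}, with $h_1=f+g$ and $h_2=f-g$, we have $M_{\Z_4\times\Z_2}(F)=M_{\Z_4}(h_1)M_{\Z_4}(h_2)$ subject to $h_1\equiv h_2\pmod 2$.  Using
\[
M_{\Z_4}(h)=\bigl((c_0+c_2)^2-(c_1+c_3)^2\bigr)\bigl((c_0-c_2)^2+(c_1-c_3)^2\bigr),
\]
I would expand the product as $(E^2-4E'^2)(F^2+4F'^2)$ for explicit integer combinations of the coefficients of $f,g$.  When $M$ is odd, $E$ and $F$ are odd, and a short parity calculation gives $E'\equiv F'\pmod 2$, so both factors are $\equiv 1\pmod 8$ or both $\equiv 5\pmod 8$; either way $M\equiv 1\pmod 8$.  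For even $M$, reduction modulo $(1+i)$ in $\Z[i]$ shows $M_{\Z_4}(h_1)\equiv M_{\Z_4}(h_2)\pmod 2$, so both are even and each is divisible by $16$ by \eqref{Z4}, giving $2^8\mid M$.  The lower bound follows from a short list of generators (such as $1,9,17,-7,256$) together with the semigroup property.

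\smallskip

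\textbf{The group $Q_8$.}  Applying \eqref{E-Q} with $n=2$ and evaluating term by term yields
\[
M_{Q_8}(F)=(f(1)^2-g(1)^2)(f(-1)^2-g(-1)^2)\bigl(|f(i)|^2+|g(i)|^2\bigr)^{\!2}=u\,v^{2},
\]
where $v=(a_0-a_2)^2+(a_1-a_3)^2+(b_0-b_2)^2+(b_1-b_3)^2$.  For odd $M$, a case split on the parities of $f(\pm 1),g(\pm 1)$ shows $u\equiv 1$ or $5\pmod 8$; since $v$ is odd, $v^2\equiv 1\pmod 8$, so $M\equiv u\pmod 8$.  If $u\equiv 5\pmod 8$ the same parity analysis forces $v\equiv 3\pmod 4$, hence $v$ has a prime factor $p\equiv 3\pmod 4$ and $p^2\mid v^2\mid M$, so $M=(8m-3)p^2$ for some $m$.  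For even $M$ the four parity patterns of $(f(1),g(1))$ each give $2^8\mid M$.  For achievability, $\S(Q_8)\supseteq\S(\Z_4\times\Z_2)$ is built by suitable $f,g$; for the $(8m-3)p^2$ values I would fix $p$ and $m$, write $u=8m-3$ as a product of two differences of squares, and use the fact that every $p\equiv 3\pmod 4$ has a sum-of-four-squares representation with exactly one even summand to realise $v=p$ with parities compatible with the chosen $u$.

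\smallskip

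\textbf{The group $\Z_2^3$.}  Splitting $F(x,y,z)=G(y,z)+xH(y,z)$ and setting $F^\pm=G\pm H$ gives $M_{\Z_2^3}(F)=M_{\Z_2^2}(F^+)M_{\Z_2^2}(F^-)$ with $F^+\equiv F^-\pmod 2$.  Using $M_{\Z_2^2}(G)=P^2-4Q^2$ with $P=a^2+b^2-c^2-d^2$ and $Q=ab-cd$, multiplication yields
\[
M_{\Z_2^3}(F)=R^2-4S^2,\quad R=P^+P^--4Q^+Q^-,\quad S=P^+Q^--P^-Q^+.
\]
The relations $P^+\equiv P^-,\,Q^+\equiv Q^-\pmod 2$ hold automatically, making $S$ even; for odd $M$ this gives $M\equiv R^2\equiv 1\pmod 8$.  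For even $M$ each $M(F^\pm)\in\S(\Z_2^2)$ is even, hence lies in $\{2^4(2k+1),2^6k\}$; the delicate case is when both are of the form $2^4(2k+1)$, where I need to track the mod-$4$ residues of the odd parts to show they agree, giving $M\in 2^8(4\Z+1)$.  In the remaining configurations at least one factor contributes $2^6$, and careful bookkeeping on $P^\pm,Q^\pm$ mod $4$ then yields $2^{12}\mid M$.  Lower bounds follow from explicit constructions (e.g.\ $F=4+2x$ gives $M=2^8\cdot 81$) and the semigroup property.

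\smallskip

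The main obstacle will be this final refinement for $\Z_2^3$ (and the analogous $256$-divisibility for $Q_8$ even values): distinguishing $2^8(4\Z+1)$ from $2^{12}\Z$ requires a careful mod-$4$ comparison with no analogue in the simpler $\Z_2\times\Z_2$ case.
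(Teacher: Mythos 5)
Your necessity arguments (the containments $\S(G)\subseteq\cdots$) are essentially sound and in two places take a genuinely different route from the paper: for $\Z_4\times\Z_2$ you quote \eqref{Z4} to get $2^4\mid M_{\Z_4}(h_i)$ for each even factor, and for $\Z_2^3$ you reduce to $\S(\Z_2\times\Z_2)$ via the identity $M=R^2-4S^2$ with $S$ even. Both shortcuts are valid and somewhat cleaner than the paper's direct parity bookkeeping on the eight values $f(\pm1,\pm1)$, $g(\pm1,\pm1)$; your ``delicate case'' does close, since when $2^4\parallel M(F^{\pm})$ for both factors the congruence $F^+\equiv F^-\bmod 2$ forces the two odd parts into the same class mod $4$, and the configurations $2^9,2^{10},2^{11}\parallel M$ cannot occur. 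The $Q_8$ analysis is essentially the paper's, except that the case split must be on the parities of the four individual quantities $a_0-a_2$, $a_1-a_3$, $b_0-b_2$, $b_1-b_3$ rather than on $f(\pm1),g(\pm1)$ alone: the latter is too coarse to yield $2^8\mid M$ when $f(1),g(1)$ are both even, and too coarse to link $u\equiv 5\bmod 8$ to $v\equiv 3\bmod 4$.

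The genuine gap is in the achievability direction. For $\Z_4\times\Z_2$ and $\Z_2^3$ you propose to obtain $\supseteq$ from ``a short list of generators together with the semigroup property.'' This cannot work: $\{8m+1\}\cup\{2^8m\}$ is not finitely generated as a multiplicative semigroup. For example $33=3\cdot 11$ lies in the target set but is irreducible there (none of $\pm3,\pm11$ is $\equiv 1\bmod 8$ or a multiple of $2^8$), and the same holds for $pq$ with any primes $p,q\equiv 3\bmod 8$, so no finite list of generators suffices. What is needed --- and what the paper supplies --- is a one-parameter polynomial family for each congruence class, e.g.\ $\mathscr{D}_G(m+1,m,\dots,m)=8m+1$ together with companions realising $\pm 2^8(4k+1)$ and $2^8(2k)$ for all $m,k$, and for $\Z_2^3$ families realising $8m+1$, $2^8(4m+1)$ and $2^{12}m$. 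Your plan for the extra $Q_8$ values $(8m-3)p^2$ via a four-square representation of $p$ with exactly one even summand is correct and matches the paper, but the inclusion $\S(Q_8)\supseteq\S(\Z_4\times\Z_2)$, which you only assert, needs the same kind of explicit parametric constructions.
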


Note  that  
$$\S(\Z_2^3)\subsetneq \S(\Z_4\times\Z_2) \subsetneq \S(Q_8) \subsetneq\S(D_8)\subsetneq \S(\Z_8). $$ 

The Theorem immediately gives us the minimum non-trivial measure for the groups of order 8. 
\begin{corollary}
$$\lambda(\mathbb Z_4\times \mathbb Z_2)=\lambda(\mathbb Z_2^3)=\lambda(Q_8)=7 \text{ and }  \lambda (D_8)=\lambda(\mathbb Z_8)=3. 
$$
\end{corollary}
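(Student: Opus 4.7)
The plan is to extract $\lambda(G)$ from each explicit description of $\S(G)$ by exhibiting a smallest admissible nonzero, non-unit element and then checking that no other branch of the description contributes anything smaller.

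For $\Z_8$ and $D_8$ I would use the descriptions already recalled at the start of Section~\ref{8gps}: $\S(\Z_8)=\{2m+1,\, 32m : m\in\Z\}$ and $\S(D_8)=\{4m+1,\, 2^8 m : m\in\Z\}$. The odd-integer branch contributes $|s|=3$ in both cases: for $\Z_8$ take $s=3=2\cdot 1+1$; for $D_8$ take $s=-3$, which lies in $\{4m+1\}$ via $m=-1$. In both cases the only other branch contributes multiples of $32$ or $256$, which are larger than $3$. Hence $\lambda(\Z_8)=\lambda(D_8)=3$.

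For $\Z_4\times\Z_2$ and $\Z_2^3$, Theorem~\ref{Q8} shows $\S(G)\subseteq\{8m+1,\, 2^8 m : m\in\Z\}$. Among $s\equiv 1\pmod 8$ with $s\notin\{0,\pm 1\}$, the candidates are $\ldots,-15,-7,9,17,\ldots$, so the smallest absolute value is $|-7|=7$, achieved by $m=-1$; this is strictly less than $2^8=256$. Since Theorem~\ref{Q8} asserts equality (not just inclusion), $-7$ actually is attained, giving $\lambda(\Z_4\times\Z_2)=\lambda(\Z_2^3)=7$.

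For $Q_8$, the inclusion $\S(\Z_4\times\Z_2)\subseteq\S(Q_8)$ immediately yields $\lambda(Q_8)\le 7$. For the reverse inequality I would show that every element of $\S(Q_8)\setminus\S(\Z_4\times\Z_2)$ has absolute value at least $27$: by Theorem~\ref{Q8} each such element has the form $(8m-3)p^2$ with $p\equiv 3\pmod 4$ prime, whence $p\ge 3$, so $p^2\ge 9$, while $8m-3$ runs through $\ldots,-11,-3,5,13,\ldots$ and so has $|8m-3|\ge 3$; thus $|s|\ge 27>7$. Combined with the upper bound this gives $\lambda(Q_8)=7$. There is no real obstacle here beyond this case analysis; everything follows mechanically from Theorem~\ref{Q8} and the earlier descriptions of $\S(\Z_8)$ and $\S(D_8)$.
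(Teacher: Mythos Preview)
Your argument is correct and is exactly the intended one: the paper simply states that the corollary follows immediately from Theorem~\ref{Q8} (together with the known descriptions of $\S(\Z_8)$ and $\S(D_8)$), and you have written out the routine case-check that this entails.
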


\begin{comment}
 CJS reworded Thm., and combined Th 3.1 and 3.2.
For $G=\mathbb Z_8$ the determinants correspond to measures of one variable 
polynomials $f(x)$ in $\mathbb Z[x]$  reduced mod $\langle x^8-1 \rangle$ to the 
form 
$$ f(x)=\sum_{0\leq j\leq 7} a_jx^j \in \mathbb Z[x]. $$ 

\begin{theorem} \label{Z8}
For $G=\mathbb Z_8$ the values taken by $\mathscr{D}_{G}(a_0,\ldots ,a_7)$  for 
integers $a_0,\ldots ,a_7$ are the odd integers and the multiples of 32.
\end{theorem}

For $G=\mathbb Z_2^3$ the determinants correspond to measures of polynomials in 
$\mathbb Z[x,y,z]$, reducible mod $\langle x^2-1,y^2-1,z^2-1\rangle$  to 
$$ f(x)=\sum_{i,j,k\in \{0,1\}} a_{(i,j,k)}x^iy^jz^k \in \mathbb Z[x,y,z]. $$

\begin{theorem}\label{Z222} For $G=\mathbb Z_2^3$ the values taken by 
$\mathscr{D}_G(a_{(0,0,0)},\ldots ,a_{(1,1,1)})$ for integer variables 
$a_{(i,j,k)}$ are the  integers of the form $8m+1$, $2^8(4m+1)$ and $2^{12}m$  
for some integer $m$.
\end{theorem}
\end{comment}

\section{The Alternating Group $A_4$}\label{S-A4}

Taking the two generators
$$ \alpha =(123),\;\;\; \beta=(12)(34),  $$
we order the elements $(g_1,g_2,\ldots ,g_{12})$ of $A_4$ as
\begin{align*}
& ( 1, (12)(34), (13)(24), (14)(23), (123), (243), (142), (134), (132),(143),  
(234), (124) )\\
 & \;\;\;\; =(1, \beta, \alpha^2\beta \alpha , \alpha \beta \alpha^2 ,\alpha 
,\beta \alpha,\alpha^2 \beta \alpha^2 ,\alpha \beta, \alpha^2, \beta \alpha^2, 
\alpha^2 \beta ,\alpha \beta \alpha).
\end{align*}
Now $A_4$ has  four irreducible representations: three linear ones 
$\chi_0,\chi_1,\chi_2$ where $\chi(\beta)=1$ and $\chi(\alpha)=1,\om$ or $\om^2$ 
respectively with $\om :=e^{2\pi i/3}$, and one, $\rho$,  of degree $3$. 
This latter representation comes from isometries of the regular 
tetrahedron (vertices $1,2,3,4$) relative to the three axes passing through the 
centres of opposite pairs of sides. Explicitly, 
\[
\rho(\alpha) =\left(\begin{matrix}0&0&1\\1&0&0\\0&1&0\end{matrix}\right)
\qquad\text{ and } \qquad
\rho(\beta) = \left(\begin{matrix} 1&0&0\\0&-1&0\\0&0&-1\end{matrix}\right),
\]
generating the representation:
\begin{align*}  
\rho( \alpha^2 \beta \alpha) =  
\left(\begin{matrix}-1&0&0\\0&-1&0\\0&0&1\end{matrix}\right), &\;\;\;
\rho( \alpha \beta \alpha^2) = 
\left(\begin{matrix}-1&0&0\\0&1&0\\0&0&-1\end{matrix}\right),\;\;\;
\rho( \beta \alpha) = 
\left(\begin{matrix}0&0&1\\-1&0&0\\0&-1&0\end{matrix}\right),\;\;\; \\
\rho( \alpha^2\beta \alpha^2) = 
\left(\begin{matrix}0&0&-1\\-1&0&0\\0&1&0\end{matrix}\right), & \;\;\;
\rho(\alpha \beta) = 
\left(\begin{matrix}0&0&-1\\1&0&0\\0&-1&0\end{matrix}\right),\;\;\; 
 \rho( \alpha^2) = 
\left(\begin{matrix}0&1&0\\0&0&1\\1&0&0\end{matrix}\right),\;\;\; \\
\rho( \beta \alpha^2) = 
\left(\begin{matrix}0&1&0\\0&0&-1\\-1&0&0\end{matrix}\right),& \;\;\;
\rho( \alpha^2 \beta)=  
\left(\begin{matrix}0&-1&0\\0&0&-1\\1&0&0\end{matrix}\right),\;\;\;
\rho( \alpha \beta \alpha)=  
\left(\begin{matrix}0&-1&0\\0&0&1\\-1&0&0\end{matrix}\right).\;\;\;
\end{align*}
Writing $x=\sum_i a_i g_i$ in $\mathbb Z [G]$,  
then for $G=A_4$ the group determinant takes the form
$$ \mathscr{D}_{G}(a_1,a_2,\ldots ,a_{12}) =l_0 l_1 l_2 D^3 $$
where, putting
$$a:=a_1+a_2+a_3+a_4,\;\;\; b:=a_5+a_6+a_7+a_8 \;\; \text{ and } \;\; 
c:=a_9+a_{10}+a_{11}+a_{12},$$ 
we have
$$ l_0 =\chi_0(x)=a+b+c,\;\;\;l_1=\chi_1(x) = a+b\om +c\om^2,\;\;\;l_2= 
\chi_2(x)=a+b\om^2+c\om, $$
and
$$ D=\det(\rho(x))=\det\left( \begin{matrix} 
a_1+a_2-a_3-a_4&a_9+a_{10}-a_{11}-a_{12}&a_5+a_6-a_7-a_8\\
                     a_5-a_6-a_7+a_8& a_1-a_2-a_3+a_4&a_9-a_{10}-a_{11}+a_{12}\\
a_9-a_{10}+a_{11}-a_{12}&a_5-a_6+a_7-a_8&a_1-a_2+a_3-a_4
\end{matrix}\right). $$

We  can regard $\mathscr{D}_{G}(a_1,a_2,\ldots ,a_{12})$ as the Lind measure 
$M_G(F)$ of the generic polynomial
\begin{align*}  F(x,y) = a_1  +  & a_2 y + a_3 x^2 y x+ a_4 x y x^2 + a_5 x+ a_6 
yx +a_7 x^2 y x^2 \\
   &  + a_8 xy + a_9 x^2 +a_{10} y x^2 + a_{11} x^2 y + a_{12}x y x, 
\end{align*}
in $\mathbb Z[x,y]$ with non-commutative multiplication,  and reduction according to the 
relations 
$$x^3=1,\;\; y^2=1,\;\; yxy=x^2yx^2\;\; \text{ and }\;\; yx^2y=xyx. $$

\begin{theorem} \label{A4} We have $\S(A_4)=\S(A_4)_{\even}\cup\S(A_4)_{\odd}$, where
\[
\S(A_4)_{\even}  =\{2^a 3^b m_6\midd  a=4, a\geq 8, \; b=0, b\geq 2\}
\]
and
\[
\S(A_4)_{\odd}=\{m\equiv 1 \bmod 4 \midd 3\nmid m, 3^2\mid m\}.
\]
\end{theorem}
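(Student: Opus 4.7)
The approach relies on the factorization $\mathscr{D}_{A_4}(a_1,\dots,a_{12}) = l_0 l_1 l_2\cdot D^3$ displayed above, in which $l := l_0 l_1 l_2 = a^3+b^3+c^3-3abc = \mathscr{D}_{\Z_3}(a,b,c)$ and $D=\det M$ is the $3\times 3$ determinant coming from the $3$-dimensional irreducible representation. I would prove both inclusions of $\S(A_4) = \S(A_4)_{\even}\cup\S(A_4)_{\odd}$ in turn.

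\textbf{Forward inclusion: $3$-adic and mod-$4$ conditions.} The $3$-adic condition is immediate from \eqref{Zp}: since $l\in\S(\Z_3) = \{m_3,\;9\Z\}$ we have $v_3(l)\neq 1$, so $v_3(\mathscr{D}) = v_3(l)+3v_3(D)\neq 1$. For the mod-$4$ condition I first check that $l$ and $D$ share the same parity. The diagonal entries $A_k$ of $M$ are Klein-$4$ character sums of $(a_1,\dots,a_4)$, so $A_k\equiv a\pmod 2$; similarly each off-diagonal entry has parity $b$ or $c$. Expanding $\det M$ modulo $2$ then gives $D\equiv a^3+b^3+c^3-3abc\equiv l\pmod 2$. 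When $\mathscr{D}$ is odd, using the cyclic symmetry among $(a,b,c)$ I may assume $a$ is odd and $b,c$ are even; then $l\equiv a\pmod 4$, all off-diagonal entries of $M$ are even, and only the term $A_1 A_2 A_3$ survives in $\det M$ modulo $4$. Applying \eqref{Z2Z2} to $V = \langle\beta,\alpha^2\beta\alpha\rangle\cong\Z_2\times\Z_2$ yields $a\cdot A_1 A_2 A_3 = \mathscr{D}_V(a_1,\dots,a_4)\equiv 1\pmod 4$, so $lD\equiv 1\pmod 4$ and hence $\mathscr{D}\equiv lD^3\equiv lD\equiv 1\pmod 4$, as $D$ is odd.

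\textbf{$2$-adic analysis in the even case.} Since $\mathscr{D}$ even forces both $l$ and $D$ even, $v_2(\mathscr{D})\geq 4$ automatically. The remaining task, excluding $v_2(\mathscr{D})\in\{5,6,7\}$, I expect to be the main technical obstacle. The plan is a case split on the parity pattern of $(a,b,c)$. When all three are even I factor $2^3$ out of $l$ (via $a^3+b^3+c^3-3abc = 8\mathscr{D}_{\Z_3}(a/2,b/2,c/2)$) and an analogous $2^3$ out of $D$, giving $v_2(\mathscr{D})\geq 12$. When all three are odd I use $a+b\omega+c\omega^2 = 2\gamma$ for some $\gamma\in\Z[\omega]$, which forces $v_2(l)\geq 2$, together with a parallel congruence for $D$. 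The delicate case is when exactly two of $(a,b,c)$ are odd: here I would refine by residues mod $4$ of each coset sum and track $l$ and $D$ modulo $8$ via the mod-$8$ refinement of the Klein-$4$ identity \eqref{Z2Z2}. The crux is to show that whenever $v_2(D)=1$ one must simultaneously have $v_2(l)=1$, thereby coupling the two valuations so that the forbidden combinations never arise.

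\textbf{Reverse inclusion.} By the semigroup property \eqref{mult} it suffices to exhibit generators for each half of the target set. Direct computation gives: $\mathscr{D}=1$ (trivially); $\mathscr{D}=9$ from $(a_1,a_2,a_5)=(1,1,1)$ (for which $l=9$, $D=1$); $\mathscr{D}=16$ from $(a_1,a_5)=(1,1)$ (then $l=D=2$); $\mathscr{D}=256 = 2^8$ from $(a_1,a_5,a_9)=(1,1,2)$ (then $l=D=4$); and $\mathscr{D}=5$ from $(a_1,a_4,a_5,a_9,a_{10})=(1,1,1,1,1)$. For each prime $p\equiv 1\pmod 4$ coprime to $3$, I would first pick $(a,b,c)\in\Z^3$ with $\mathscr{D}_{\Z_3}(a,b,c)=p$ (available since $\S(\Z_3)$ contains every integer coprime to $3$) and then distribute the coset sums among $(a_1,\dots,a_{12})$ so as to force $D=1$, by analogy with the construction for $\mathscr{D}=5$. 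Negative generators such as $\mathscr{D}=-27$ (from $(a_2,a_3,a_4)=(1,1,-1)$) supply the needed signs, and multiplicativity then fills out all of $\S(A_4)_{\even}\cup\S(A_4)_{\odd}$.
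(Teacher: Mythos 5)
Your overall strategy---the factorization $\mathscr{D}=lD^3$ with $l=l_0l_1l_2=\mathscr{D}_{\Z_3}(a,b,c)$, the $3$-adic condition read off from $\S(\Z_3)$, a $2$-adic coupling between $l$ and $D$, and explicit generators plus multiplicativity---is the same as the paper's, and your odd case is correct: reducing by the cyclic symmetry to $a$ odd, $b,c$ even, so that $D\equiv A_1A_2A_3\bmod 4$ and $a\cdot A_1A_2A_3=\mathscr{D}_{\Z_2\times\Z_2}(a_1,\ldots,a_4)\equiv 1\bmod 4$, is a nice alternative to the paper's route. But the even case, which you yourself flag as ``the main technical obstacle,'' is where the theorem actually lives, and you have not proved it. The paper's entire $2$-adic analysis rests on the single congruence $D\equiv l_0l_1l_2\pmod 4$ (obtained by expanding $\det M$), from which everything follows at once: odd $\mathscr{D}\equiv l^4\equiv 1\bmod 4$, and in the even case $2\parallel l\iff 2\parallel D$, forcing $v_2(\mathscr{D})=4$ or $v_2(\mathscr{D})\geq 8$. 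Note also that the ``crux'' you state---$v_2(D)=1\Rightarrow v_2(l)=1$---is only half of what is needed: it rules out the combinations $v_2(l)\geq 2$, $v_2(D)=1$ (totals $5,6,7$), but not $v_2(l)=1$, $v_2(D)=2$, which gives the forbidden total $7$. You need the two-way coupling $4\mid l\iff 4\mid D$.

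The reverse inclusion is also genuinely incomplete. Your generators $\{1,5,9,16,256,-27\}$ together with primes $p\equiv 1\bmod 4$ do not multiplicatively generate the target set: for instance $77=7\cdot 11\equiv 1\bmod 4$ lies in $\S(A_4)_{\odd}$ but admits no factorization into your generators (both prime factors are $\equiv 3\bmod 4$), and none of $-2^4$, $2^9$, or $2^4\cdot 7$ is reachable ($2\notin\S(A_4)$, and $-27$ supplies a sign only at the cost of a factor $3^3$). This is exactly why the paper works with parametric families rather than isolated values: $\mathscr{D}=1+12k$ and $5+12k$ give every $m\equiv 1\bmod 4$ with $3\nmid m$ directly; $\pm 2^4$ and $\pm 2^8(1+3k)$ (with $k=0,-1,1,-3$ yielding $\pm 2^8,\ldots,\pm 2^{11}$) give all $\pm 2^a$ with $a=4$ or $a\geq 8$; and $9$, $-27$ supply the $3$-parts. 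Your plan of ``forcing $D=1$ while $l=p$'' is plausible but unproved, and even carried out for all primes $\equiv 1\bmod 4$ it would not suffice.
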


\section{Groups of Order 12} \label{12gps}

There are five groups of order twelve: $\mathbb Z_{12},$ $\mathbb Z_6 \times 
\mathbb Z_2$, $D_{12}$, $Q_{12}$ and $A_4$ (dealt with in the 
previous section). 

From \cite{dihedral} we know that
$\S(D_{12})=\S(D_{12})_{\even}\cup\S(D_{12})_{\odd}$, where
\[
\S(D_{12})_{\even}=\{ 2^a3^bm_6 \midd   a=4, a\geq 6,\;\;  b=0, b\geq 3\}
\]
and
\[
\S(D_{12})_{\odd} =\{m\equiv 1\bmod 4 \midd  3\nmid m, 3^3\mid m\}.
\]

For the groups $G=Q_{12}$, $D_{12}$ and $\mathbb Z_6 \times \mathbb Z_2$ we  
work with measures of polynomials
$$ F(x,y)=f(x)+yg(x)\in \mathbb Z[x,y],\;\;\; f(x)=\sum_{j=0}^5 a_jx^j,\;\; 
g(x)=\sum_{j=0}^5 b_jx^j. $$
These are given in terms of Lind-Mahler measures of cyclic groups by \eqref{E-Q} for $Q_{12}$,
by \eqref{E-D} for $D_{12}$, and by
\be\label{E-Z62}
M_{\Z_6\times\Z_2}(F)=M_{\Z_6}(f(x)+g(x))\cdot M_{\Z_6}(f(x)-g(x)).
\ee

\begin{theorem} \label{Q12}
If  $G=Q_{12}$ the set $\S(Q_{12})$ consists of measures  $M_G(F)$ of the following forms
\begin{align} 2^a 3^b m_6 \midd \; &  a=0, 4, a\geq 
6,\;\; b=0,  b\geq 3, \\
2^5 3^b m_6 \midd \; &  b=4, b\geq 6,\\
\label{32m} 2^5 3^bm_6k \midd\; &   b=0, 3, 5, 
 \end{align}
where, in \eqref{32m},  $k$ can be a prime $p\equiv 5\bmod 12$ and also the square of a  prime $p\equiv 5 \bmod 6$.

\comments{\vspace{1ex}
\noindent
For $G=D_{12}$ the odd measures are the $m\equiv 1 \bmod 4$ with $3\nmid m$ or 
$27\mid m$.
The even measures take the form $2^4m$ with $2\nmid m$ or $2^6m$ with $3\nmid m$ 
or $3^3\mid m$.
}

\end{theorem}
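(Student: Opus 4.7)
The plan is to use the Lind reduction $M_{Q_{12}}(F)=M_{\Z_6}(h(x))$ from \eqref{E-Q}, where $h(x)=f(x)f(x^{-1})-x^3g(x)g(x^{-1})$ satisfies $h(x^{-1})=h(x)$ modulo $x^6-1$. Evaluating at the six sixth roots of unity and using $\omega_6^3=-1$, the measure factors as
\[
M_{Q_{12}}(F)=L_1 L_2 Q_1^2 Q_2^2,
\]
where $L_1=f(1)^2-g(1)^2$, $L_2=f(-1)^2+g(-1)^2$, $Q_1=|f(\omega_6)|^2+|g(\omega_6)|^2$ and $Q_2=|f(\omega_3)|^2-|g(\omega_3)|^2$. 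Since $n=3$ is odd, \eqref{E-ggff} gives $\S(Q_{12})=-\S(Q_{12})$, so signs may be ignored throughout.

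I would verify the necessary divisibility conditions prime by prime. At the prime $3$: using $\omega_3\equiv 1$ and $\omega_6\equiv -1$ modulo the ramified prime $(1-\omega_3)$ in $\Z[\omega_3]$, one finds the congruences $Q_2\equiv L_1\pmod 3$ and $Q_1\equiv L_2\pmod 3$; together with the fact that $3$ is inert in $\Z[i]$, making $v_3(L_2)\in\{0\}\cup[2,\infty)$, this forces $v_3(M)\in\{0\}\cup[3,\infty)$ exactly as in the $D_{12}$ analysis of \cite{dihedral}. At the prime $2$: the identity $f(1)\equiv f(-1)\pmod 2$ ties the parities of the four linear forms and yields $v_2(L_1)+v_2(L_2)\in\{0\}\cup[4,\infty)$. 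For $Q_1, Q_2$, the key point is that $2$ is inert in $\Z[\omega_6]$ with residue field $\mathbb{F}_4$, on which Frobenius exchanges $\bar\omega_6$ and $\bar\omega_3$; hence $\bar f(\bar\omega_6)=0$ if and only if $\bar f(\bar\omega_3)=0$, forcing the pair $(v_2(Q_1),v_2(Q_2))$ to take one of the shapes $(0,0)$, $(1,1)$ or $(\geq 2,\geq 2)$. Combined, $v_2(M)\in\{0,4,5,6,7,\ldots\}$.

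The subtlest step is the $v_2(M)=5$ case, where one must have $(v_2(Q_1),v_2(Q_2))=(0,0)$ and $v_2(L_1)+v_2(L_2)=5$, occurring either with $f(1), g(1)$ both odd (so $v_2(L_1)\geq 4$, $v_2(L_2)=1$) or with $f(1), g(1)$ both even. Here the odd part of $L_2$ is half a Gaussian norm, so any odd rational prime $p\equiv 3\pmod 4$ divides $L_2$ only to an even power; and the odd parts of $Q_1, Q_2$, being sums and differences of Eisenstein norms, carry parallel restrictions tied to the mod-$3$ analysis above. Together with the $v_3$ constraints these isolate the factor $k$ as a prime $p\equiv 5\pmod{12}$ (split in $\Z[i]$ and inert in $\Z[\omega_3]$) or a squared prime $p\equiv 5\pmod 6$ (inert in $\Z[\omega_3]$), and allow unrestricted $m_6$ only when $v_3\in\{4\}\cup[6,\infty)$ absorbs the extra $3$-content.

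For sufficiency, I would use \eqref{mult} to reduce to a short list of explicit generators, following the dihedral strategy of \cite{dihedral}: $F=1$ for the value $1$; $(f,g)$ pairs realising small representatives of each admissible $m_6$ residue class; and further $(f,g)$'s with $M_{Q_{12}}(F)$ equal to $27$, $2^4$, $2^6$, $2^5\cdot 3^4$, $2^5\cdot 3^6$, and each boundary $2^5 p$ and $2^5 p^2$. Multiplying these generators and invoking $\S(Q_{12})=-\S(Q_{12})$ then covers every value in the three families of the theorem. The main obstacle is the combined mod-$4$/mod-$9$ analysis needed for the $v_2(M)=5$ restriction, together with the explicit hand-computation of the $2^5 p$ and $2^5 p^2$ examples that realise the boundary of the admissible set.
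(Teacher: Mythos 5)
Your factorization $M=L_1L_2Q_1^2Q_2^2$ is exactly the paper's $M=ab(cd)^2$ (with $L_1=a$, $L_2=b$, $Q_2=c$, $Q_1=d$), and your local analysis at $2$ and $3$, your identification of $v_2(M)=5$ as the delicate case, and your generators-plus-multiplicativity strategy all coincide with the paper's proof, which proceeds in thirteen steps along these lines. Two points, however, are genuine gaps rather than routine verifications. First, the necessity direction for the shape of $k$: you say the norm restrictions on $L_2$ and on $Q_1,Q_2$ ``isolate'' $k$, but the actual argument is a dichotomy on whether $3\mid f(-1)g(-1)$. If it does (and, given $2^5\parallel M$ and $v_3(M)\in\{0,3,5\}$, one shows $3$ cannot divide both $f(-1)$ and $g(-1)$), then $L_2$ or $L_2/4$ equals $2k$ with $k$ odd, $k\equiv 2\bmod 3$ and a sum of two squares, forcing an odd power of a prime $p\equiv 5\bmod{12}$; if it does not, then $Q_1=|f(-\omega)|^2+|g(-\omega)|^2\equiv 2\bmod 3$ is odd and enters $M$ squared, forcing $p^2$ with $p\equiv 5\bmod 6$. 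Without this case split you cannot explain why the alternatives for $k$ are exactly a first power of a $5\bmod{12}$ prime or the square of a $5\bmod 6$ prime.

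Second, and more seriously, realizing $2^5p^2$ for every prime $p\equiv 5\bmod 6$ is not a ``hand-computation'': one must produce $f,g$ with $Q_1=p$, i.e.\ represent $p$ as a sum of two Eisenstein norms with the right parities, and the paper does this by invoking Dickson's theorem that $x^2+xy+y^2+z^2$ represents every integer not of the form $9^k(9n+6)$, writing $p=N_1(\alpha)+4N_1(\beta)$ and normalizing $\alpha,\beta$ via Lemma \ref{normform1} before writing down explicit polynomials. This quadratic-form input is the one idea your plan is missing; the $2^5p$ case, by contrast, really is elementary via $2p=(1+6A)^2+(3+6B)^2$. A small inaccuracy that does not affect the outcome: the pair $(v_2(Q_1),v_2(Q_2))$ is not restricted to the shapes $(0,0)$, $(1,1)$, $(\geq 2,\geq 2)$ --- mixed shapes such as $(1,\geq 2)$ can occur when $f(\omega_3),g(\omega_3)$ are both units modulo $2$ --- but since only $(Q_1Q_2)^2$ enters $M$, all that is needed is that its contribution to $v_2(M)$ is even and either $0$ or at least $4$.
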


For $G=\mathbb Z_6\times\mathbb Z_2$ and $\mathbb Z_{12}$ we need to partition the 
primes $p\equiv 1\bmod 12$ into two sets
\begin{align}
\P_1:& =\{p\equiv 1 \bmod 12 \; :\; p=(6k+2)^2 + (6t+3)^2 \text{ for some 
$k,t\in \mathbb Z$}\},\nonumber \\
\label{defS} \P_2:& =\{p\equiv 1 \bmod 12 \; :\; p=(6k)^2 + (6t+1)^2 
\text{ for some $k,t\in \mathbb Z$}\}. 
\end{align}

These sets are disjoint by the uniqueness of representation of primes $\equiv 1\bmod 4$ as a sum of two squares. They are probably not describable  by a simple congruence; see  Cox \cite{Cox} for a  
class field theory approach  to distinguishing which primes are  of the form 
$x^2+36y^2$.

\begin{comment} Your original version below seems to need some extra work to prove disjointness?
If I'm wriong, please reinstate it:

These sets are disjoint by uniqueness of factorisation in $\mathbb Z[i]$  (and 
probably not describable  by a simple congruence; see  Cox \cite{Cox} for a  
class field theory approach  to distinguishing which primes are  of the form 
$x^2+36y^2$).
\end{comment}

\vspace{2ex}
\noindent
\begin{theorem}\label{Z2Z6}  For   $G=\mathbb Z_6 \times \mathbb Z_2$, the set $\S(\mathbb Z_6 \times \mathbb Z_2)$ consists of measures $M_G(F)$ of the following forms:

\vspace{1ex}

\noindent
(a) The measures with $3^3\mid M_G(F)$ take the form
$$ 3^3(4m-1),\;\;\; 2^4\cdot 3^3 (2m-1),  
  \;\;\;2^6\cdot 3^3 m. $$

\vspace{1ex}

\noindent
(b) The measures with $3^2 \parallel  M_G(F)$ take the form
$$ 3^2(4m-1)p\;\;\; 2^4 \cdot 3^2(2m-1)p  \;\;\;  2^6 \cdot 3^2 mp  $$
for some prime $p\equiv 7 \bmod 12$, or 
$$ 2^8 \cdot 3^2 (4m-1) \;\;\; \hbox{ or } \;\;\; 2^{10}\cdot 3^2 (4m-1) \;\;\; 
\hbox{ or }\;\;\; 2^{12} \cdot 3^2 (2m-1) \;\;\; \hbox{ or } \;\;\; 2^{14}\cdot 
3^2 m. $$

\vspace{1ex}

\noindent
(c) The measures coprime to $3$ take the form
$$ 12m+1, \;\;\;  2^4(6m+1)  \;\; \; 2^6(3m+1), $$
or
\be \label{type2}  (12m+5)k,  \;\;\; -2^4(6m+1)k,\;\;\; -2^6(3m+1)k \ee
where $k=p$ for  some prime $p\equiv 1\bmod 12$ in $\P_1$, or $k=p^2$ for some 
$p\equiv 5\bmod 12$, or $k=p_1p_2$ for some primes $p_1,p_2\equiv 7\bmod 12$, or
$$ 2^8(12m+5), \;\;\;  2^8(12m+5)p, \;\;\;   
2^{10}(12m+5),  \;\;\;  2^{10}(12m+5)p, $$
for some $p\equiv 7\bmod 12$, or
$$-2^{12}(6m+1), \;\;\;  -2^{14}(3m+1).$$
In each case $m$ runs through all the integers.

\end{theorem}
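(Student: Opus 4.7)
The plan rests on the identity \eqref{E-Z62}, which says every $M_{\Z_6\times\Z_2}(F)$ factors as $M_{\Z_6}(u)\,M_{\Z_6}(v)$ with $u=f+g$ and $v=f-g$, and conversely any pair $u,v\in\Z[x]/(x^6-1)$ with $u\equiv v \bmod 2$ coefficient-wise arises this way, since then $(u+v)/2$ and $(u-v)/2$ lie in $\Z[x]$. Thus $\S(\Z_6\times\Z_2)$ is exactly the set of such products, and the problem splits into (i) describing $\S(\Z_6)$ refined by the mod-$2$ reduction and prime-by-prime local data of the underlying polynomial, and (ii) intersecting over all mod-$2$ compatible pairs.

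For (i), factor $M_{\Z_6}(u) = u(1)\,u(-1)\,u(\om_3)u(\om_3^2)\,u(\om_6)u(\om_6^5)$ via the four character orbits, recovering \eqref{Z2p}. At an odd prime $p\neq 3$ I additionally record which prime ideals of $\Z[\om_3]=\Z[\om_6]$ above $p$ divide $u(\om_3)$ and $u(\om_6)$: primes $p\equiv 2\bmod 3$ are inert so appear only as squares; primes $p\equiv 1 \bmod 3$ split as $p=\pi_p\bar\pi_p$ and can appear linearly. Pairing this with $v$ subject to $u\equiv v \bmod 2$, one finds that realising the product $M_{\Z_6}(u)M_{\Z_6}(v)$ with a single prime $p\equiv 1\bmod 12$ as a factor forces a congruence on $\pi_p \bmod 2$ in $\Z[\om_3]$; since $p\equiv 1\bmod 4$ also splits in $\Z[i]$ as $(a+bi)(a-bi)$, this congruence translates (via the independent mod-$2$ structure of $a$ and $b$) into the parity and mod-$6$ conditions in \eqref{defS}, and thus into the dichotomy $\P_1$ vs.\ $\P_2$. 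The same mechanism will explain why primes $\equiv 7\bmod 12$ occur only in pairs $p_1p_2$ and primes $\equiv 5\bmod 12$ only as squares $p^2$ in part~(c).

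The main obstacle is the local analysis at $2$. Since $x^6-1\equiv (x-1)^2(x^2+x+1)^2 \bmod 2$, the ring $\Z_2[x]/(x^6-1)$ has nilpotents and $v_2(M_{\Z_6}(u))$ depends on $u$ modulo higher powers of $2$, not merely mod $2$. I will do a Kaiblinger-style enumeration: for each nonzero residue $\bar u\in \mathbb F_2[x]/(x^6-1)$ compute the achievable values of $v_2(M_{\Z_6}(u))$ together with the odd part mod $4$, then intersect with the compatibility $u\equiv v \bmod 2$ to list all admissible pairs $(v_2(M_{\Z_6}(u)),v_2(M_{\Z_6}(v)))$. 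This should yield the allowed exponents $0,4,6,8,10,12,14$ of $2$ in the theorem, together with the mod-$4$ conditions distinguishing $12m+1$ from $12m+5$ on the odd parts.

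For sufficiency I will exhibit a short list of primitive constructions: an odd measure $\equiv 1\bmod 12$; a realisation of a single prime in $\P_1$ times a $12m+5$ odd factor; a realisation of a product of two primes $\equiv 7\bmod 12$; the square of a prime $\equiv 5\bmod 12$; and minimal examples for each $2$-adic exponent class. Multiplicativity \eqref{mult} of $\S(G)$ then combines these building blocks into every value listed in parts (a), (b), (c). The most delicate constructions are those producing the form \eqref{type2}, which demand a prescribed splitting at one of the norm factors together with prescribed signs and mod-$4$ classes at the two rational characters simultaneously; these will be located by a small search over $u,v$ with at most three nonzero coefficients.
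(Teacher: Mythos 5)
Your skeleton is the paper's own: the identity \eqref{E-Z62}, together with the observation that $(u,v)=(f+g,f-g)$ ranges over exactly the pairs with $u\equiv v\bmod 2$, is where the paper starts, and expanding each $\Z_6$-measure over the characters gives precisely the six-factor decomposition $M_G(F)=ab_1e_1e_2e_3e_4$ (with $a=f(1)^2-g(1)^2$, $b_1=f(-1)^2-g(-1)^2$, $e_1e_2=N_1(f(\om)^2-g(\om)^2)$, $e_3e_4=N_1(f(-\om)^2-g(-\om)^2)$) that the paper manipulates; the $2$- and $3$-adic bookkeeping and the sufficiency-by-constructions-plus-multiplicativity are likewise the same. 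But two steps that you present as routine translations are in fact the technical core, and as stated they are gaps. First, the bridge between the sum-of-two-squares definition of $\P_1,\P_2$ (which lives in $\Z[i]$) and the condition that actually governs realizability (the class of a generator of the prime above $p$ modulo $4(1-\om)$ in $\Z[\om]$, after normalizing by the units $\pm\om^j$) is not delivered by ``the independent mod-$2$ structure of $a$ and $b$''; it is the content of the paper's Lemma \ref{normform}, which requires an explicit computation in $\Z[\om,i]=\Z[\om_{12}]$ to show that $p\in\P_1$ exactly when $p=N_1\bigl(-1+2(1-\om)(2A+B\om)\bigr)$ with $B$ odd. Without that lemma (or an equivalent), the dichotomy in \eqref{defS} never connects to the measures.

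Second, the necessity half of part (c) --- that a measure $\equiv 5\bmod{12}$, or of the form $-2^4(6m+1)$ or $-2^6(3m+1)$, must carry a factor $k$ of one of the three listed types --- is not a prime-by-prime local statement about which ideals divide $u(\om)$ and $u(-\om)$. One must show that if $e_1e_2e_3e_4$ is odd and built only from primes in $\P_2$ and squares of primes $\equiv 11\bmod{12}$, then writing $f(\pm\om)\pm g(\pm\om)=\pm1+(1-\om)(A_i+B_i\om)$ forces all four $A_i$ to share a parity and all four $B_i$ to share a parity, which (because the normalized generators $1+4(A+B\om)(1-\om)$ are closed under multiplication, so the single-prime congruence propagates to arbitrary products) yields $g(\om)\equiv 0$ and $g(-\om)\equiv 1$ modulo $2$, contradicting $g(\om)\equiv g(-\om)\bmod 2$. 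This is a simultaneous parity argument across all four norm factors coupled with $u\equiv v\bmod 2$; your ``small search over $u,v$ with at most three nonzero coefficients'' addresses only the sufficiency direction. (A minor misreading as well: single primes $\equiv 5$ or $7\bmod{12}$ are not excluded from measures outright --- $85=12\cdot 7+1$ is a measure --- the theorem only says that certain residue and sign classes require such a factor $k$.) Your Kaiblinger-style enumeration for the $2$-adic part is a workable, if heavier, substitute for the paper's direct parity argument.
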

Note that in the theorem there are no measures with $3\parallel M_G(F)$.

For the group $G=\mathbb Z_{12}$, we work with measures on polynomials 
$F(x)=\sum_{j=0}^{11} a_jx^j$.

%In the next result, we let $m_t$ denote an arbitrary integer coprime to $t$.
\begin{theorem}\label{Z12}
Let $G=\mathbb Z_{12}$.  We separate  $\S(\Z_{12})$ into the odd and even  measures:

\vspace{1ex}
\noindent
(a) The odd measures coprime to $3$ can take any value $m_6$.

\noindent
The odd multiples of $3$ take the form
$9m_6p$ for  primes $p\equiv 5$ and $7\bmod 12$, and $p$ in $\P_1$, and $27m_2$.

\vspace{1ex}
\noindent
(b) The even values divisible by $3$  take the form $2^4 \cdot 3^2 m$ for all 
integers $m$.

\noindent
The even values coprime to $3$ take the forms $2^4m_6$, and
$2^5m_6p$ for some prime  $p\equiv 5$ and $7$ mod $12,$ and $p$  in $\P_1$, and $2^6m_3$.

\end{theorem}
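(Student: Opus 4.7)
The plan is to parallel the analysis of $\Z_6\times\Z_2$ in Theorem \ref{Z2Z6}, exploiting the decomposition of $M_{\Z_{12}}(F)$ coming from the factorization $x^{12}-1=\prod_{d\mid 12}\Phi_d(x)$. Writing $F(x)=f(x^2)+x\,g(x^2)$ and pairing 12th roots of unity as $\{\zeta,-\zeta\}$ gives the reduction
\[
M_{\Z_{12}}(F)=M_{\Z_6}\bigl(f(x)^2 - x\,g(x)^2\bigr),
\]
which I would use in tandem with the cyclotomic factorization $M=F(1)\,F(-1)\,N_3(F(\omega_3))\,N_4(F(i))\,N_6(F(\omega_6))\,N_{12}(F(\omega_{12}))$, where $N_d$ denotes the absolute norm from $\Z[\omega_d]$. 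The first expression lets me import information from the already-known $\S(\Z_6)$, while the second exposes the arithmetic of each subfield of $\Q(\omega_{12})$ separately.

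To derive the necessary conditions I would work out the $2$- and $3$-adic constraints from elementary congruences such as $F(1)\equiv F(-1)\pmod 2$, $N_3(F(\omega_3))\equiv F(1)^2\pmod 3$, $N_6(F(\omega_6))\equiv F(-1)^2\pmod 3$, together with their $\Z[i]$ and $\Z[\omega_{12}]$ analogues (noting that $2$ and $3$ both ramify in $\Z[\omega_{12}]$ with residue degree $2$, so any factor of $2$ or $3$ appears in $N_{12}$ with multiplicity at least $2$). These force the $2$-adic pattern $v_2(M)\in\{0,4,5,6,\ldots\}$ and the $3$-adic dichotomy for odd multiples of $3$: if $3\mid F(1)F(-1)$ the propagation to $N_3$ or $N_6$ already supplies $9\mid M$, while if $3\mid N_4(F(i))$ (with $3\nmid F(1)F(-1)$) the forced divisibility $9\mid N_{12}$ gives $27\mid M$. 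The admissible primes $p$ in the $9m_6 p$ shape are pinned down by requiring the remaining contribution to $N_{12}$ to be realisable as a norm from $\Z[\omega_{12}]$: primes inert in either $\Z[i]$ or $\Z[\omega_3]$ (i.e.\ $p\equiv 5,7\pmod{12}$) work unconditionally, while $p\equiv 1\pmod{12}$ works exactly when $p\in\P_1$.

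For the sufficiency I would invoke the semigroup property \eqref{mult} together with a short library of explicit polynomials: constants, the $1\pm x^k$, the cyclotomic polynomials $\Phi_d(x)$ for $d\mid 12$, and tailored binomials $a+b\,x^k$ whose $\omega_{12}$-norm realises a prescribed prime of $\P_1$ or one congruent to $5$ or $7\pmod{12}$. Multiplying these building blocks recovers every form in parts (a) and (b). The main obstacle, as in Theorem \ref{Z2Z6}, is the impossibility half of the $\P_1$ versus $\P_2$ dichotomy: showing that no integer polynomial $F$ realises a prime $p\in\P_2$ as the sole non-$m_6$ factor of $M/9$ while the remaining cyclotomic evaluations are forced to $\pm 1$. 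This reduces to a finite congruence check in $\Z[\omega_{12}]/(12)$ combined with the $2$-adic constraints and is where most of the bookkeeping will reside.
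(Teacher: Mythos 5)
Your overall strategy is the paper's: the identity $M_{\Z_{12}}(F)=M_{\Z_6}\bigl(f(x)^2-x\,g(x)^2\bigr)$ is exactly the decomposition $M=ab\,s_1s_2$ used there, with $a=f(1)^2-g(1)^2=F(1)F(-1)$, $b=f(-1)^2+g(-1)^2=N_4(F(i))$, $s_1=N_3N_6$ and $s_2=N_{12}$, and the congruences $s_1\equiv a^2$, $s_2\equiv b^2\bmod 3$ give the $3$-adic dichotomy as you say. But the proposal has concrete gaps beyond being a plan. First, you locate all the auxiliary primes in $N_{12}$; in fact they arise in three different places, by three different arguments. A prime $p\equiv 5\bmod 12$ is forced because $b$ is a sum of two squares which is $\equiv 2\bmod 3$, so some prime $\equiv 2\bmod 3$ divides it to an odd power, and that prime must be $\equiv 1\bmod 4$. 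A prime $\equiv 7\bmod 12$ is forced because, in the relevant subcase, $s_1=u^2-v^2$ is a norm from $\Z[\omega]$ with $s_1/3\equiv 3\bmod 4$, hence contains an odd power of a prime $\equiv 3\bmod 4$ that splits in $\Z[\omega]$. Only the $\P_1$ primes come from $s_2=\alpha^2+\beta^2$, and the $\P_1$ versus $\P_2$ obstruction is not ``a finite congruence check in $\Z[\omega_{12}]/(12)$'': it is a factorization argument in $\Z[i]$ showing that no product of Gaussian primes of $\P_2$-shape $6k\pm(6t+1)i$ together with inert primes can produce an $\alpha+i\beta$ whose real and imaginary parts have the residues modulo $6$ forced by $3\mid f(-1)g(-1)$ and the parity of $u$; this is precisely the mechanism behind the definition \eqref{defS}.

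Second, you state the $2$-adic constraint as $v_2(M)\in\{0,4,5,6,\dots\}$ but attach the auxiliary-prime condition only to the odd multiples of $3$. The clause $2^5m_6p$ in part (b) means that $2^5\parallel M$ with $3\nmid M$ \emph{also} forces a prime $p\equiv 5,7\bmod 12$ or $p\in\P_1$ (so, e.g., $32\notin\S(\Z_{12})$); establishing this requires rerunning the entire $b$, $s_1$, $s_2$ analysis in the even case, and your proposal is silent on it. Finally, for sufficiency a ``library of binomials'' will not do: the $\Z[\omega_{12}]$-norm of $a+b\,\omega_{12}$ is $a^4-a^2b^2+b^4$, which does not represent even $p=5$, and in any case to realize $9p$ or $2^5p$ exactly you must force all the remaining cyclotomic factors to $\pm1$ simultaneously while placing $p$ in the correct one. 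That is why the paper needs the explicit normal forms of Lemma \ref{normform} (the $N_1$-representations of $p\equiv 7\bmod 12$ and the $N_2$-representations of $p\in\P_1$) substituted into hand-built $f$ and $g$. These omitted steps are where the content of the theorem lies, not in the bookkeeping.
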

Again, in this theorem there are no measures with $3\parallel M_G(F)$.

\

The Lind-Lehmer constant (the minimal non-trivial measure)  is readily 
determined for the  groups of order 12:
\begin{corollary}
$$\lambda(D_{12})=\lambda(Q_{12})=\lambda(\mathbb Z_{12})=\lambda(A_4)=5,\;\;  \lambda(\mathbb 
Z_6\times\mathbb Z_2)=11
$$
\end{corollary}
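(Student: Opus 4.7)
The plan is to read off each value of $\lambda$ directly from the structural descriptions in Theorems~\ref{A4}, \ref{Q12}, \ref{Z2Z6}, \ref{Z12} and the $D_{12}$ description quoted at the start of Section~\ref{12gps}. For each group $G$ of order $12$ I would identify the smallest $|s|\ge 2$ attained in $\S(G)$ by ruling out the small candidates $\pm 2, \pm 3, \pm 4$ (and more, for $\Z_6\times\Z_2$) via the congruence and divisibility constraints of the relevant theorem, and then exhibiting an explicit witness.

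First I would handle $D_{12}$, $Q_{12}$, $A_4$ and $\Z_{12}$ together. In each of these descriptions every even element of $\S(G)$ is a multiple of $2^4$, and every multiple of $3$ in $\S(G)$ is a multiple of $9$; hence $\pm 2,\pm 3,\pm 4$ lie outside $\S(G)$. It then suffices to realise $5$ in each case: for $D_{12}$ and $A_4$ one has $5\equiv 1\bmod 4$ and $\gcd(5,3)=1$, placing $5$ in $\S(G)_{\odd}$; for $Q_{12}$ take $a=b=0$ and $m_6=5$ in the first form of Theorem~\ref{Q12}; for $\Z_{12}$ Theorem~\ref{Z12}(a) says every $m_6$ is attained as an odd measure. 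Thus $\lambda=5$ in all four cases.

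The only case requiring real care is $G=\Z_6\times\Z_2$, where I must rule out every $s$ with $|s|\in\{2,\ldots,10\}$. Parts~(a)--(b) of Theorem~\ref{Z2Z6} force every multiple of $3$ in $\S(G)$ to have absolute value at least $27$, ruling out $\pm 3,\pm 6,\pm 9$; the even values in part~(c) are all multiples of $2^4$, ruling out $\pm 2,\pm 4,\pm 8,\pm 10$. What remains is to exclude $\pm 5$ and $\pm 7$ among the odd measures coprime to $3$ in part~(c). The family $12m+1$ realises the residue class $1\bmod 12$, giving $\{\ldots,-11,1,13,\ldots\}$, none of which lies in $\{2,\ldots,10\}$; every $(12m+5)k$ lies in the class $5\bmod 12$ and uses only $k$ with $|k|\ge 13$ (the smallest prime in $\P_1$ being $13=2^2+3^2$), so has absolute value at least $5\cdot 13=65$. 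Hence $\pm 5,\pm 7$ are excluded, while $-11=12(-1)+1$ is achieved by $12m+1$ at $m=-1$. This yields $\lambda(\Z_6\times\Z_2)=11$.

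The main obstacle is the $\Z_6\times\Z_2$ step, and inside it the only delicate point is recognising that the multiplicative factors $k$ allowed in the form $(12m+5)k$ are bounded below by $13$; this forces the residue class $5\bmod 12$ to contribute nothing of small absolute value, so that $\lambda$ is delivered by $-11$ from the $12m+1$ family.
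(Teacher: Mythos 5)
Your proposal is correct and is exactly the intended argument: the paper offers no separate proof of the corollary beyond the remark that it is ``readily determined'' from Theorems~\ref{A4}, \ref{Q12}, \ref{Z2Z6}, \ref{Z12} and the quoted description of $\S(D_{12})$, and your case analysis (even values divisible by $2^4$, multiples of $3$ divisible by $9$, the residue classes $1$ and $5 \bmod 12$ for $\Z_6\times\Z_2$ with $|k|\ge 13$, and the witnesses $5$ and $-11$) fills in precisely those routine checks correctly.
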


\section{The remaining group $\mathbb Z_3 \times \mathbb Z_3$} 
\label{oddments}

\begin{theorem}\label{Z3Z3} For $G=\mathbb Z_3 \times \mathbb Z_3$ we have
$$ \S(G)= \{9m\pm 1 \midd m\in\Z\}\cup \{3^6m  \midd m\in\Z\}. $$
\end{theorem}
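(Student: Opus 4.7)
The plan is to separately establish the divisibility constraint $M_G(F)\in\{9m\pm1\}\cup 3^6\Z$ and the achievability of every element of this set.

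\textbf{Setup and the case $3\nmid s$.} I write $F(x,y)=\sum_{a,b=0}^{2}c_{ab}x^ay^b$, so that $M_G(F)=\prod_{j,k=0}^{2}F(\om^j,\om^k)$ with $\om=e^{2\pi i/3}$; I set $s=F(1,1)$ and let $\pi=1-\om$ be the prime above $3$ in $\Z[\om]$ (so $\pi\bar\pi=3$ and $v_\pi(3)=2$). Pairing the eight non-trivial factors into four Galois-conjugate pairs yields $M_G(F)=s\prod_{i=1}^{4}N_i$ with $N_i=|F(\om^{j_i},\om^{k_i})|^2\in\Z$. When $3\nmid s$, each factor is a unit mod $\pi$, so $\gcd(M_G(F),3)=1$ and each $N_i\equiv s^2\pmod 3$; writing $N_i=s^2+3m_i$, using Euler's identity $s^6\equiv 1\pmod 9$, and expanding mod $9$ gives $M_G(F)\equiv s^3+3s\sum_i m_i\pmod 9$. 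The decisive ingredient is Parseval on $\Z_3\times\Z_3$:
\[
\sum_{j,k}|F(\om^j,\om^k)|^2=9\sum_{a,b}c_{ab}^{\,2}\equiv 0\pmod 9,
\]
whose left side equals $s^2+2\sum_i N_i$; this forces $\sum_i m_i\equiv 0\pmod 3$, so $M_G(F)\equiv s^3\equiv\pm 1\pmod 9$.

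\textbf{The case $3\mid s$.} Now $v_\pi(s)\geq 2$. From $\om^n\equiv 1-n\pi\pmod{\pi^2}$ I find $F(\om^j,\om^k)\equiv s-\pi(jA+kB)\pmod{\pi^2}$, where $A=\sum ac_{ab}$ and $B=\sum bc_{ab}$. If $A\equiv B\equiv 0\pmod 3$, every non-trivial factor has $v_\pi\geq 2$ and $v_\pi(M_G)\geq 2+16=18$; otherwise the equation $jA+kB\equiv 0\pmod 3$ defines a three-element line in $(\Z/3\Z)^2$, so two non-trivial factors contribute $v_\pi\geq 2$ and six contribute $v_\pi=1$, giving $v_\pi(M_G)\geq 2+4+6=12$. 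Either way, $3^6\mid M_G(F)$.

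\textbf{Achievability.} Taking $F=\epsilon+m\,\Sigma_G$ with $\epsilon\in\{\pm 1\}$ and $\Sigma_G=\sum_{g\in G}g$ gives $F(1,1)=\epsilon+9m$ and $F(\om^j,\om^k)=\epsilon$ for $(j,k)\neq(0,0)$; thus $M_G(F)=\epsilon+9m$, realizing every integer $\equiv\pm 1\pmod 9$. For the $3^6\Z$ side, the basic computation $M_G(2x+y)=3\cdot 3\cdot 9\cdot 3\cdot 3=3^6$ generalizes to $F=l(2x+y)+t\,\Sigma_G$, for which $M_G(F)=3^6 l^8(l+3t)$; choosing $l=\pm 1$ realizes every $3^6k$ with $\gcd(k,3)=1$. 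To cover $3^6k$ with $3\mid k$, I use polynomials whose $F(1,1)$ is divisible by a higher power of $3$ (for instance $F=1+2x+5y+y^2$ yields $v_3(M_G(F))=7$), combined with the semigroup closure of $\S(G)$ against elements of $\S(G)\cap\{9m\pm 1\}$.

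\textbf{Main obstacle.} The divisibility falls out cleanly once one notices that Parseval forces $\sum_{j,k}|F(\om^j,\om^k)|^2\equiv 0\pmod 9$, and the combinatorial count of $\{jA+kB\equiv 0\pmod 3\}$ handles the divisible case. The delicate part is achievability of multiples of $3^6$ whose cofactor is divisible by $3$: I must verify that suitable higher-order constructions, together with the multiplicative closure of $\S(G)$, fill out all of $3^6\Z$ rather than a proper sub-semigroup.
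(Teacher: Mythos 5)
Your divisibility argument is correct, and in places takes a genuinely different and cleaner route than the paper: the Parseval identity $\sum_{j,k}|F(\om^j,\om^k)|^2=9\sum_{a,b}c_{ab}^2$, which forces $\sum_i m_i\equiv 0\pmod 3$, gives a self-contained proof that the measures coprime to $3$ are $\equiv\pm1\pmod 9$ (the paper simply cites the known $\Z_p\times\Z_p$ result from \cite{dilum}), and your count of the line $\{(j,k): jA+kB\equiv 0\pmod 3\}$ in $(\Z/3\Z)^2$ packages in one stroke the paper's case analysis on the coefficients $A_1,B_0$ of $F$ expanded in powers of $(x-1)$ and $(y-1)$. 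The constructions realizing $9m\pm1$ and $3^6k$ with $3\nmid k$ are also correct (your $F=l(2x+y)+t\,\Sigma_G$ with $l=\pm1$ plays the role of the paper's $M_G(1+2x+m(x^2+x+1)(y^2+y+1))=3^6(1+3m)$ together with $M_G(-F)=-M_G(F)$).

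The gap is exactly where you flag it, and it is not a formality: your mechanism cannot produce all of $3^7\Z$. Since the measures coprime to $3$ form only the set $\{9m\pm1\}$, i.e.\ the index-$3$ subgroup $\{\pm1\}$ of $(\Z/9\Z)^*$, multiplying a single exhibited value such as $M_G(1+2x+5y+y^2)=3^7\cdot 2^6\cdot 13$ (whose cofactor $832\equiv 4\pmod 9$) by coprime-to-$3$ measures only reaches cofactors $\equiv\pm 4\pmod 9$ and misses the classes $\pm1,\pm2$. Worse, any product of two measures each divisible by $3$ has $3$-adic valuation at least $12$ by your own divisibility bound, so each exact valuation $v_3=7,8,9,10,11$ (and then $13,\dots,17$, and so on) would require its own fresh stock of explicit examples covering all three classes of $(\Z/9\Z)^*/\{\pm1\}$; the semigroup generated by finitely many examples together with $\{9m\pm1\}$ and $\{3^6u: 3\nmid u\}$ is therefore always a proper sub-semigroup of the target. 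What is needed is a one-parameter family hitting every multiple of $3^7$ at once; the paper uses
$M_G\left(1+2x-x(y^2+y+1)+m(x^2+x+1)(y^2+y+1)\right)=3^7m$,
which one checks directly (the six points with $y\ne 1$ contribute $3^4$, the two points $(\om^{\pm1},1)$ contribute $(1-\om)(1-\om^2)=3$, and $F(1,1)=9m$). Adding such an identity closes the gap; without it your proof of the inclusion $3^6\Z\subseteq\S(G)$ is incomplete.
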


\section{Proofs for Section \ref{8gps}}

\begin{proof}[Proof of Theorem \ref{Q8}] We first prove the result for $G=Q_8$ and $\Z_4\times\Z_2$. Since it requires no extra work
we also include $D_8$, although it is already covered in \cite{dihedral}. We begin by comparing the  form of  the Lind measure
for  these three groups.
In all three cases we have the same four linear factors in $\mathbb 
Z[a_0,a_1,a_2,a_3,b_0,b_1,b_2,b_3]$, namely
\begin{align*}
\ell_1 & :=F(1,1)=(a_0+a_2)+(a_1+a_3) +(b_0+b_2)+(b_1+b_3),\\
\ell_2 & :=F(1,-1)=(a_0+a_2)+(a_1+a_3) - (b_0+b_2)- (b_1+b_3),\\
\ell_3 & :=F(-1,1)=(a_0+a_2)-(a_1+a_3) + (b_0+b_2)-(b_1+b_3),\\
\ell_3 & :=F(-1,-1)=(a_0+a_2)-(a_1+a_3) -(b_0-b_2)+(b_1+b_3).\end{align*}
The remaining factors are  quadratics   in $\mathbb 
Z[a_0,a_1,a_2,a_3,b_0,b_1,b_2,b_3]$: 
\begin{align*} \mathscr{D}_{Q_8}(a_0,a_1,a_2,a_3,b_0,b_1,b_2,b_3) & = \ell_1 
\ell_2\ell_3\ell_4 q_1^2, \\
\mathscr{D}_{D_8}(a_0,a_1,a_2,a_3,b_0,b_1,b_2,b_3)&= \ell_1 \ell_2\ell_3\ell_4 
q_2^2,\\
\mathscr{D}_{\mathbb Z_4\times \mathbb Z_2}(a_0,a_1,a_2,a_3,b_0,b_1,b_2,b_3)&= 
\ell_1 \ell_2\ell_3\ell_4 q_3q_4,
\end{align*}
where
\begin{align*}
 q_1 & :=|f(i)|^2+|g(i)|^2=(a_0-a_2)^2+(a_1-a_3)^2+(b_0-b_2)^2+(b_1-b_3)^2,\\
 q_2 & :=|f(i)|^2-|g(i)|^2=(a_0-a_2)^2+(a_1-a_3)^2-(b_0-b_2)^2-(b_1-b_3)^2,\\
q_3  & :=|f(i)+g(i)|^2= ((a_0-a_2)+(b_0-b_2))^2+ ((a_1-a_3)+(b_1-b_3))^2,   \\
q_4 & := |f(i)-g(i)|^2=  ((a_0-a_2) -(b_0-b_2))^2 +((a_1-a_3) -(b_1-b_3))^2 .
\end{align*}

Thus for $G=Q_8$, $D_8$ or $\mathbb Z_4 \times \mathbb Z_2$ we have
\begin{align*} 
&\mathscr{D}_{G}(m+1,m,m,m,m,m,m,m) =8m+1,\\
&\mathscr{D}_{G}(k+ 2,k,k,k,k,k,k,k) =2^8(4k+1),\\
&\mathscr{D}_{G}(k-1,k+1,k-1,k+1,k+1,k+1,k,k) =-2^8(4k+1),\\
&\mathscr{D}_{G}(k+1,k,k+1,k,k-1,k-1,k,k)  =2^8(2k).
\end{align*}
Equivalently, writing these as polynomial measures we have
\begin{align*}  & M_G\left( 1+ m\frac{x^4-1}{x-1} + 
ym\frac{x^4-1}{x-1}\right)=8m+1,\\
& M_G\left( 2+ k\frac{x^4-1}{x-1} + yk\frac{x^4-1}{x-1}\right)=2^8(4k+1),\\
 & M_G\left( (x^2+1)(x-1)+ k\frac{x^4-1}{x-1} + y\left( 
(x+1)+k\frac{x^4-1}{x-1}\right)\right)=-2^8(4k+1),\\
 & M_G\left( (x^2+1)+ k\frac{x^4-1}{x-1} + 
y\left(-(x+1)+k\frac{x^4-1}{x-1}\right)\right)=2^8(2k).
\end{align*}
Writing $p\equiv 3$ mod 4 as $p=a^2+b^2+c^2+d^2$ with $a$ even, $b,c,d$ odd,  
then
\begin{align*}
(a_0,a_1,a_2,a_3) & 
=\left(m+\frac{a}{2},m+\frac{(b-1)}{2},m-\frac{a}{2},m-\frac{(b+1)}{2}\right),\\
 (b_0,b_1,b_2,b_3) & 
=\left(m+\frac{(c-1)}{2},m+\frac{(d-1)}{2},m-\frac{(c+1)}{2},m-\frac{(d+1)}{2}
\right),  
\end{align*}
has $\mathscr{D}_{Q_8}(a_0,a_1,a_2,a_3,b_0,b_1,b_2,b_3)=(8m-3)p^2,$
while
$$\mathscr{D}_{D_8}(m,m,m,m-1,m,m,m-1,m-1)=8m-3. $$

It remains to show that a determinant $\mathscr{D}_G$ takes one of the stated 
forms. Suppose first that $\mathscr{D}_G$ is even. Since the $\ell_j\equiv 
\ell_1$ mod 2 and the $q_i\equiv \ell_1^2$ mod 2 we know that the $\ell_j$ and 
$q_j$ are all even. If $(a_0-a_2)$, $(a_1-a_3)$, $(b_0-b_2)$, $(b_1-b_3)$ 
are all even  or all odd then  in all cases $4\mid q_j$ and $2\mid \ell_j$ and 
$2^8\mid \mathscr{D}_G$.
So suppose two of them are even and two odd. Hence two of $(a_0+a_2)$, 
$(a_1+a_3)$, $(b_0+b_2)$, $(b_1+b_3)$ 
are even and two odd. Call these $A,B,C,D,$ in any order,  then 
$$ \ell_1\ell_2\ell_3\ell_4=((A+B)^2-(C+D)^2)((A-B)^2-(C-D)^2). $$
Hence if $A,C$ are odd and $B,D$ even then $(A\pm B)^2-(C\pm D)^2 \equiv 1-1=0$ 
mod 8, so $2^6\mid  \ell_1\ell_2\ell_3\ell_4$
and $2\mid q_j$, and $2^8\mid \mathscr{D}_G$.

Suppose that $\mathscr{D}_G$ is odd. So either  one or three of the  
$(a_0-a_2)$, $(a_1-a_3)$, $(b_0-b_2)$, $(b_1-b_3),$ 
will be odd (and hence one or three  of  the corresponding  $A,B,C,D$,  will 
odd). Suppose that $A$ is odd and $B$ is even and $C,D$ have the same parity.  
Plainly $q_1^2,q_2^2\equiv $ 1 mod 8 and
$$ q_4=q_3 -4(a_0-a_2)(b_0-b_2) -4(a_1-a_3)(b_1-b_3).   $$
So if three of $A,B,C,D$ are  even  then  $q_4\equiv q_3$ mod 8 and 
$q_3q_4\equiv q_3^2\equiv 1$ mod 8, while if three are odd
we have  $q_4\equiv q_3+4$ mod 8 and $q_3q_4\equiv q_3^2-4\equiv -3$ mod 8.
Similarly
\begin{align*} (A-B)^2 - (C-D)^2  & = (A+B)^2 - (C+D)^2 -4AB+4CD \\
& \equiv (A+B)^2 - (C+D)^2 +4CD \text{ mod 8}.\end{align*}
Hence if $C,D$ are even we have $\ell_1\ell_2\ell_3\ell_4 \equiv 
((A+B)^2-(C+D)^2)^2 \equiv 1$ mod 8 and 
$\mathscr{D}_G\equiv 1$  mod 8.
If $C,D$ are odd then $\ell_1\ell_2\ell_3\ell_4 \equiv ((A+B)^2-(C+D)^2)^2-4 
\equiv -3$ mod 8, and
  $\mathscr{D}_G\equiv 1$ mod 8 for $G=\mathbb Z_4 \times \mathbb Z_2$, and  
$\mathscr{D}_G\equiv -3$ mod 8 for $G=Q_8$ or $D_8$, with $q_1\equiv 3$ mod 4 
for $G=Q_8$ (and so divisible by at least one $p\equiv 3$ mod 4).
This completes the proof for $G=Q_8$, $\Z_4\times\Z_2$ and $D_8$.

\begin{comment}
\begin{proof}[Proof of Theorem \ref{Z8}]
We have
$ \mathscr{D}_{\mathbb Z_8}=\prod_{j=0}^7 f(w_8^j) = L_1 L_2 HK, $
where
\begin{align*}
 L_1 & =f(1) = (a_0+a_4)+(a_1+a_5)+(a_2+a_6)+(a_3+a_7),\\
 L_2 & =f(-1) = (a_0+a_4)-(a_1+a_5)+(a_2+a_6)-(a_3+a_7),\\
H & = |f(i)|^2= ((a_0+a_4)-(a_2+a_6))^2 + ((a_1+a_5)-(a_3+a_7))^2,\\
K & = |f(w_8)f(-w_8)|^2=((a_0-a_4)^2-(a_2-a_6)^2 +2(a_1-a_5)(a_3-a_7))^2 \\
 & \;\;\;\;\;\;\;\;+ ((a_3-a_7)^2-(a_1-a_5)^2+2(a_0-a_4)(a_2-a_6))^2.   
\end{align*}
We have
\begin{align*}
\mathscr{D}_{\mathbb Z_8}(k+1,k-1,k,k,k,k,k,k)& =(2k)2^5,\\
\mathscr{D}_{\mathbb Z_8}(k+1,k+1,k+1,k,k+1,k,k,k)&  =(2k+1)2^5,
\end{align*}
while for odd $j$
$$\mathscr{D}_{\mathbb Z_8}(\overbrace{k+1,\ldots , k+1}^{j},k,\ldots ,k)=(8k+j) 
 $$
(the latter construction working in $\mathbb Z_n$ to give $(nk+j)$ for any 
$\gcd(n,j)=1$).
Hence it just remains to show that the even $\mathbb Z_8$-measures are multiples 
of $2^5$.
Suppose that  $\mathscr{D}$ is even.  Since  $L_2\equiv L_1$ mod 2, $H\equiv 
L_1^2$ mod 2 and $K\equiv L_1^4$ mod 2 
we know that $L_1,L_2,H,$ and $K$ are all even. If  $A=(a_0+a_4)$ and 
$B=(a_2+a_6)$ have the same parity then $C=a_1+a_5$ and $D=a_3+a_7$
must have the same parity and plainly $4\mid H,4\mid K$ and $2^6\mid 
\mathscr{D}$.  So suppose that $A,B$ have opposite 
parity. Hence $C,D$  also have opposite parity, giving $2||H,K$ but  
$L_1L_2=(A+B)^2-(C+D)^2 \equiv 1-1=0\text{ mod } 8,$ and 
$2^5\mid \mathscr{D}$.

\end{proof}
\end{comment}

Next, for $G=\mathbb Z_2^3$ we have 
$\displaystyle M_G(F)=\prod_{x,y,z=\pm 1} F(x,y,z)$. 

We can achieve anything of the form $8m+1$, $2^8(4m+1)$ or $2^{12}m$ using
\begin{align*}
& M_G\left( 1+m(1+x)(1+y)(1+z)\right)  = 8m+1, \\
& M_G\left( 2+m(1+x)(1+y)(1+z)\right)  = 2^8(4m+1), \\
&M_G\left( 3+z+k(1+x)(1+y)(1+z)\right)  = 2^{12}(2k+1), \\
& M_G\left( x+y+z-3+(1-x)(1-y)(1-z) +k(1+x)(1+y)(1+z)\right)  = 2^{12}(2k), 
\end{align*}
so it just remains to check that any $M_G(F)$ is  of one of these forms.

We write $F(x,y)=f(x,y)+zg(x,y)$, with $f(x,y)$ and $g(x,y)$ of the form
$$ g(x.y)=a(0,0) + a(1,0)x+a(0,1)y+a(1,1)xy\in \mathbb Z[x,y], $$
so that 
$$ M_G(F) = \prod_{x,y=\pm 1} f(x,y)^2-g(x,y)^2. $$
Notice that the $f(\pm 1,\pm 1)\equiv f(1,1)$ mod 2, and $g(\pm 1,\pm 1)\equiv 
g(1,1)$ mod 2, and $M_G(F)\equiv (f(1,1)-g(1,1))^4 $ mod 2 is even if $f(1,1)$ 
and $g(1,1)$ have the same parity and odd otherwise.

Suppose first that $M_G(F)$ is odd. Reversing the roles of $f(x,y)$ and $g(x,y)$ 
as necessary we suppose that the 
$f(\pm 1,\pm 1)$ are all  odd and the $g(\pm 1,\pm 1)$ all even.  Then mod 8 we 
have 
\begin{align*}  M_G(F)  & \equiv \prod_{x,y=\pm 1} (1-g(x,y)^2 )
  \equiv 1- \sum_{x,y=\pm 1} g(x,y)^2\equiv 1- \left(\sum_{x,y=\pm 1} 
g(x,y)\right)^2.
\end{align*}
But 
$$\sum_{x=\pm 1} g(x,y)=4a(0,0), $$
and  so $M_G(F)\equiv 1$ mod 8.

Suppose that $M_G(F)$ is even. If  the $f(\pm 1,\pm 1)$ and $g(\pm 1,\pm 1)$ are 
all odd then the $f(x,y)^2-g(x,y)^2\equiv 1-1=0$ mod $8$ for each of the four 
factors and $2^{12} \mid M_G(F)$. So suppose that they are all even and
$$ M_G(F) = 2^8 \prod_{x,y=\pm 1} (f(x,y)/2)^2 - (g(x,y)/2)^2. $$
If any of the $f(x,y)/2$ and $g(x,y)/2$ have the same parity then 4 divides that 
factor and $2^{12}\mid M_G(F)$. So assume that they have opposite  parity for 
all $x,y=\pm 1,$ with $(f(x,y)/2)^2 - (g(x,y)/2)^2$ equalling  $1$ mod 4 if 
$g(x,y)/2$ is even and $-1$ mod 4 if $g(x,y)/2$ is odd. But the $\sum_{x,y=\pm 
1} g(x,y)/2=2a(0,0)$ is even, so we must have an even number of $-1$'s and 
$\prod_{x,y=\pm 1} (f(x,y)/2)^2 - (g(x,y)/2)^2\equiv 1$ mod 4.
\end{proof}

\section{Proof of Theorem  \ref{A4}}

Suppose that 3 divides
$$\mathscr{D}_G(a_1,\ldots ,a_{12})=l_0l_1l_2 D^3. $$ 
If $3\mid l_0 l_1 l_2$ then 3 divides  $l_0$ or $l_1l_2$
and, from  the  congruence
$$ l_1 l_2 \equiv l_0^2 \text{ mod } 3, $$
must divide  both, and $3^2 \mid \mathscr{D}_G(a_1,\ldots ,a_{12})$.
If $3\mid D$ then plainly $3^3\mid \mathscr{D}_G(a_1,\ldots ,a_{12})$.  Hence 
$3\mid \mathscr{D}_G(a_1,\ldots ,a_{12})$
implies that $3^2\mid \mathscr{D}_G(a_1,\ldots ,a_{12})$.

It is easy to see that $D\equiv \det \begin{pmatrix}  a & c & b\\ b & a & c \\ c 
& b & a   \end{pmatrix}$ mod 2, the circulant determinant equalling $l_0l_1l_2$, 
the $\mathbb Z_3$ measure of $a+bx+cx^2$,  but in fact expanding  we have the 
stronger congruence
$$ D\equiv l_0l_1l_2 \text{ mod } 4. $$
From this we see that any odd determinant must have $l_0l_1l_2$ odd and
$$ \mathscr{D}_G(a_1,\ldots ,a_{12})\equiv (l_0l_1l_2)^4 \equiv 1 \text{ mod } 
4. $$
If the determinant is even then 2 divides $l_0l_1l_2$ or $D$ and from the 
congruence must divide both. Moreover if $2\parallel  l_0 l_1 l_2$
then $2\parallel D$  and $2^4 \parallel  \mathscr{D}_G(a_1,\ldots ,a_{12})$, 
while if $2^2\mid l_0l_1l_2$ then $2^2\mid D$ and $2^8\mid  
\mathscr{D}_G(a_1,\ldots ,a_{12})$.

Hence the determinants must be of the stated form. It remains to show that we 
can achieve all these.
From
\begin{align*}
\mathscr{D}_G(1,1,0,0,0,0,0,0, 1,0,0,0) & =9,\\
\mathscr{D}_G(1,1,-1,0, 0,0,0,0, 0,0,0,0) & = -27,
\end{align*}
and  multiplication we can obtain any $\pm 3^b$ with $b\geq 2$ which is $1$ mod 
$4$.

For the powers of 2 we have 
\begin{align*}
 &\mathscr{D}_G(0,0,0,0, 1,0,0,0, 1,0,0,0)  = 2^4,\\
& \mathscr{D}_G(1,-1,0,0, 1,0,0,0, 1,0,0,0)  = -2^4,
\end{align*}
and
\begin{align*}
& \mathscr{D}_G(k+2,k,k,k, k+1,k,k,k, k+1,k,k,k)  =2^8(1+3k),\\
& \mathscr{D}_G(k+2,k+1, k-1,k,  k+1,k,k,k, k+1,k,k,k)  =-2^8(1+3k),
\end{align*}
with  $k=0,-1,1,-3$ giving $\pm 2^8,\pm 2^9,\pm 2^{10}, \pm 2^{11}.$ 
Multiplication of these gives all  $\pm 2^a$ with $a=4$ or $a\geq 8$.

The $m\equiv 1$ mod 4 with $(m,6)=1$ can be obtained with
\begin{align*}
& \mathscr{D}_{G}(k+1, k,k,k, k,k,k,k ,k,k,k,k)=1+12k, \\
& \mathscr{D}_{G} (k+1,k,k,k, k+1,k+1,k,k, k+1,k+1,k,k)=5+12k.
\end{align*}
Multiplication of these produces  all  the forms in Theorem \ref{A4}. \qed

\section{Proofs for Section \ref{12gps}}
We write $\om$ for the primitive cube root of unity  $\om_3 =e^{2\pi i/3}$, and observe that  $\om i$ is a primitive
$12$th root of unity.

We shall need some results on factoring in $\mathbb Z [i]$
and $ \mathbb Z[\om]$ and $\mathbb Z[\om_{12}]=\mathbb Z[\om i]=\mathbb Z[\om,i]$. Note that  (see for 
example \cite[Chapter 11]{Washington})  all these rings are UFDs, with the primes 
splitting in $\mathbb Z[i]$ being $2$ and those $p\equiv 1$ mod 4 and in 
$\mathbb Z[\om ]$ being $3$ and  those primes $p$ with  
$\left(\frac{-3}{p}\right)=1$, 
namely  $p\equiv 1$ or $7\bmod 12$. The primes  $p\equiv 1\bmod 12$ split in both 
$\mathbb Z[\om]$ and $\mathbb Z[i]$, being a product of $4$ primes in 
$\mathbb Z [\om,i]$.
We write $N_1$ and $N_2$ for the norms for $\mathbb Z[\om]$ and $\mathbb 
Z[\om,i]$, so that
\begin{align*} N_1(H(\om))  & =H(\om)H(\om^2)=|H(\om)|^2,\\
 N_2(H(\om,i)) & =H(\om,i)H(\om^2,i)H(\om,-i)H(\om^2,-i).
\end{align*}

\begin{lemma}\label{normform1}
If $\alpha$ in $\mathbb Z [\om]$ has $3\nmid N_1(\alpha)$  then one of $\pm 
\alpha$ takes  the form 
$$-1+(A+B\om)(1-\om), \;\;\; A,B\in \mathbb Z,$$
while if  $\gcd(N_1(\alpha),6)=1$ then exactly one of $\pm \alpha,\pm \alpha \om, \pm 
\alpha \om^2$ 
takes the form
$$   - 1 + 2(A+B\om)(1-\om),\;\;\; A,B\in \mathbb Z. $$
\end{lemma}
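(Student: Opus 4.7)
The plan is to reduce both claims to congruences in $\mathbb{Z}[\om]$ modulo the ideals $(1-\om)$ and $2(1-\om)$. Two standard facts drive the argument: since $x^2+x+1$ is irreducible mod $2$, the prime $2$ is inert and $\mathbb{Z}[\om]/(2)\cong\mathbb{F}_4$; and since $(1-\om)(1-\om^2)=3$, the element $1-\om$ is prime with $\mathbb{Z}[\om]/(1-\om)\cong\mathbb{F}_3$.

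For the first assertion, observe that an element of $\mathbb{Z}[\om]$ has the form $-1+(A+B\om)(1-\om)$ with $A,B\in\mathbb{Z}$ precisely when it is congruent to $-1$ modulo $(1-\om)$ (using that $\{A+B\om\}$ sweeps out all of $\mathbb{Z}[\om]$). The hypothesis $3\nmid N_1(\alpha)$ forces $\alpha$ to be a unit in $\mathbb{Z}[\om]/(1-\om)\cong\mathbb{F}_3$, so $\alpha\equiv\pm 1\pmod{1-\om}$. Picking the sign that makes $\pm\alpha\equiv -1$ then produces the required form.

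For the second assertion, an element has the form $-1+2(A+B\om)(1-\om)$ precisely when it is congruent to $-1$ modulo $2(1-\om)$. By the Chinese Remainder Theorem,
\[
\mathbb{Z}[\om]/(2(1-\om))\;\cong\;\mathbb{F}_4\times\mathbb{F}_3,
\]
whose unit group has order $3\cdot 2=6$; the hypothesis $\gcd(N_1(\alpha),6)=1$ places $\alpha$ in this unit group. The six associates $\{\pm 1,\pm\om,\pm\om^2\}\cdot\alpha$ will succeed provided the six units themselves are pairwise distinct modulo $2(1-\om)$. This is the crucial verification: modulo $(1-\om)$ they reduce to $\pm 1$ (since $\om\equiv 1$), giving two classes of three; modulo $(2)$ they reduce to the three nonzero elements of $\mathbb{F}_4$ (since $-1=1$ in characteristic $2$), giving three classes of two. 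The two groupings are transverse in the CRT decomposition and therefore separate all six associates. Hence $\{\pm 1,\pm\om,\pm\om^2\}$ surjects bijectively onto $(\mathbb{Z}[\om]/(2(1-\om)))^*$, and there is a unique $u\in\{\pm 1,\pm\om,\pm\om^2\}$ with $u\alpha\equiv -1\pmod{2(1-\om)}$.

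The main obstacle is this final bookkeeping: upgrading "at least one" to "exactly one" requires confirming that the six unit multiples of $\alpha$ really land on six different residue classes in the CRT quotient. Everything else is a direct application of the two standard residue field identifications $\mathbb{Z}[\om]/(2)\cong\mathbb{F}_4$ and $\mathbb{Z}[\om]/(1-\om)\cong\mathbb{F}_3$.
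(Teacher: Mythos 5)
Your proof is correct, and it takes a genuinely different route from the paper's. You recast the target form $-1+2(A+B\om)(1-\om)$ as the single congruence $\alpha\equiv -1 \bmod 2(1-\om)$, then exploit the ring isomorphism $\mathbb Z[\om]/(2(1-\om))\cong \mathbb F_4\times\mathbb F_3$ (valid since $2$ is inert and $1-\om$ is the ramified prime above $3$, and the two ideals are comaximal) to see that the unit group has order six and that the six roots of unity $\pm1,\pm\om,\pm\om^2$ inject into it — your ``transversality'' check (same residue mod $2$ forces opposite residues mod $1-\om$, and vice versa) is exactly what is needed, and the counting then gives existence \emph{and} uniqueness in one stroke. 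The paper instead works entirely by hand: it writes $\alpha=a+b(1-\om)$, normalizes $a\equiv-1\bmod 3$ using $3=(1-\om)(2+\om)$, and then runs an explicit case analysis on the parities of $A$ and $B$, exhibiting formulas for $\alpha\om$ and $\alpha\om^2$ that move each parity class to the ``both even'' one. The paper's computation has the advantage of producing the explicit unit to multiply by (which is what gets reused in the proof of Lemma 8.2), while your argument is shorter, makes the ``exactly one'' assertion transparent rather than implicit in the phrase about cycling through parity combinations, and would generalize immediately to other moduli of the form $\mathfrak{p}\mathfrak{q}$ with coprime residue characteristics.
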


\begin{proof} Suppose that $3\nmid N_1(\alpha)$. Writing $\alpha=a+b(1-\om)$ we 
have $3\nmid a,$ else $3\mid N_1(\alpha)=a^2+3ab+3b^2$. 
Replacing $\alpha $ by $-\alpha$ as necessary we can assume that $a\equiv -1$ 
mod 3 and, writing $a=-1+3k$, $3=(1-\om)(2+\om)$  
we get
$$\alpha = -1  +(A+B\om)(1-\om).    $$
Suppose also that $2\nmid N_1(\alpha)$. We can't have $A$ even and $B$ odd, 
else 
$$\alpha = 2w+(A+(B-1)\omega)(1-\omega) \; \Rightarrow \; 2\mid N_1(\alpha). $$

 If $A,B$ are both odd we take
$$ \alpha \om= -1+ ((1-B) + (A-B)\om)(1-\om), $$
and if $A$ is odd and $B$ even we can take
$$ \alpha \om^2 =-1 + ( (B-A +1)+ (1-A)\om)(1-\om). $$
With $A,B$ even the $\alpha \om^j$   cycle through the three possible parity 
combinations.
\end{proof}

In \eqref{defS} we partitioned the primes $p\equiv 1\bmod 12$  into two 
sets $\P_1$ and $\P_2$.
\begin{lemma}\label{normform}
If $p\equiv 7\bmod 12$ then $p=N_1(\alpha)$ for an
$$ \alpha = -1 + 2(1-\om) (2A+1 + B\om), \;\;\;\; A,B \in \mathbb Z, $$
with $B$ even (also  one  with $B$  odd).

If $p\equiv 1\bmod 12$ then $p=N_1(\alpha)$ for an
\be \label{norm1mod12}  \alpha = -1 + 2(1-\om) (2A + B\om), \;\;\;\; A,B \in 
\mathbb Z, \ee
with $B$ even when $p\in\P_2$ and $B$ odd when $p\in\P_1$.

If $p\equiv 1\bmod 12$ then  $p=N_2(\alpha_1)$ for some
\be \label{form1}  \alpha_1 = -1 + (2A+2B\om)(1-\om) + 
i\left(2+(C+2D\om)(1-\om)\right),\;\;\; A,B,C,D\in \mathbb Z, \ee
with  $C$ even when $p\in \P_1$ and $C$ odd when $p$ is in $\P_2$. Also  
$p=N_2(\alpha_2)$ for some
\be \label{2ndform}  \alpha_2= (A+1+2B\om)(1-\om) +  i \left(1+(C+D\om)(1-\om)\right), \;\;\; 
A,B,C,D\in \mathbb Z,\ee
with  $A,C$ and $D$ all  even when $p\in \P_1$ and all  odd when $p\in\P_2$. 
\end{lemma}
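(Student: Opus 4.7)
Since $p\equiv 1$ or $7\bmod 12$ implies $\left(\frac{-3}{p}\right)=1$, the prime $p$ splits in $\mathbb Z[\omega]$, so there exists $\alpha_0\in\mathbb Z[\omega]$ with $N_1(\alpha_0)=p$. As $\gcd(p,6)=1$, Lemma \ref{normform1} provides a unit $u\in\{\pm1,\pm\omega,\pm\omega^2\}$ making $\alpha:=u\alpha_0 = -1 + 2(A+B\omega)(1-\omega)$ for some $A,B\in\mathbb Z$. A short expansion gives $\alpha=(2A+2B-1)+(4B-2A)\omega$ and
\[
N_1(\alpha)=12(A^2-AB+B^2)-6A+1,
\]
so $p\equiv 1-6A\bmod 12$. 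Hence $A$ is odd when $p\equiv 7\bmod 12$ (matching $2A+1$ in the statement) and $A$ is even when $p\equiv 1\bmod 12$ (matching $2A$).

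Next I would analyze the action of complex conjugation. Starting from $\bar\alpha=-1+2(A+B\omega^2)(1-\omega^2)$ and simplifying with $\omega^2=-1-\omega$ and $1-\omega^2=2+\omega$, one can put $\bar\alpha$ back into the normalized form with $A$ unchanged but $B$ replaced by $A-B$. Thus the parity of $B$ is flipped by conjugation precisely when $A$ is odd, that is, for $p\equiv 7\bmod 12$. This produces both parity types for such $p$ (taking $\alpha$ and $\bar\alpha$), and it shows that for $p\equiv 1\bmod 12$ the parity of $B$ is an invariant of $p$. The remaining task is to identify which parity corresponds to $\mathcal P_1$ versus $\mathcal P_2$.

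The main obstacle is this last identification. Writing $A=2A'$, the relation $N_1(\alpha)=p$ becomes
\[
p=(6A'-1)^2+12(B-A')^2,
\]
equivalently $\alpha=m+2n\sqrt{-3}$ in $\mathbb Z[\omega,i]$ with $m=6A'-1$, $n=B-A'$, using $\sqrt{-3}=2\omega+1$. One also checks directly that $\alpha\equiv 2B-1\bmod 4$ in $\mathbb Z[\omega]$, so $B$ is odd iff $\alpha\equiv 1\bmod 4$. I would then lift to $\mathbb Z[\omega,i]$, where $p$ splits into four primes with $\alpha$ and the Gaussian prime $\beta=x+yi$ of $\mathbb Z[i]$ above $p$ each the product of two of them. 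The criterion
\[
\beta^{2}\equiv x^{2}-y^{2}\bmod 3
\]
in $\mathbb Z[i]/3\cong \mathbb F_9$ distinguishes $\mathcal P_1$ ($3\mid y$, giving $\beta^{2}\equiv 1$) from $\mathcal P_2$ ($3\mid x$, giving $\beta^{2}\equiv -1$). The cyclotomic reciprocity between $\alpha\bmod 4$ in $\mathbb Z[\omega]$ and $\beta\bmod 3$ in $\mathbb Z[i]$, both governed by the Galois action $\{1,\sigma_5,\sigma_7,\sigma_{11}\}$ on the four primes above $p$ in $\mathbb Z[\omega,i]$, then matches the two parity conditions on $B$ with $\mathcal P_1$ and $\mathcal P_2$. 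Finally, the forms $\alpha_1,\alpha_2\in\mathbb Z[\omega,i]$ with $N_2(\alpha_j)=p$ are handled analogously: pick a prime $\pi$ of $\mathbb Z[\omega,i]$ above $p$, normalize by the 12th roots of unity to land in the stated shape, and use $N_2(\pi)=p\equiv 1\bmod 12$ together with the same mod\nobreakdash-4/mod\nobreakdash-3 reciprocity to pin down the parities of $A,C,D$ and the $\mathcal P_1$/$\mathcal P_2$ dichotomy.
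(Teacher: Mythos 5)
Your opening computations are correct and in places cleaner than the paper's: expanding $\alpha=-1+2(A+B\om)(1-\om)=(2A+2B-1)+(4B-2A)\om$ gives $N_1(\alpha)=12(A^2-AB+B^2)-6A+1\equiv 1-6A\bmod{12}$, which pins down the parity of $A$ in each congruence class, and your observation that conjugation sends $B\mapsto A-B$ (so it flips the parity of $B$ exactly when $A$ is odd) correctly yields both parities for $p\equiv 7\bmod{12}$ and shows that for $p\equiv 1\bmod{12}$ the parity of $B$ is an invariant of $p$. Up to that point you are essentially reproving Lemma \ref{normform1} plus a small amount extra, and it agrees with the paper.

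However, the heart of the lemma is precisely the step you defer: proving that $B$ is odd exactly for $p\in\P_1$ and even for $p\in\P_2$, and the corresponding parity statements for $\alpha_1$ and $\alpha_2$ in $\mathbb Z[\om,i]$. Your proposal replaces this with an appeal to a ``cyclotomic reciprocity between $\alpha\bmod 4$ in $\mathbb Z[\om]$ and $\beta\bmod 3$ in $\mathbb Z[i]$, governed by the Galois action,'' which is not a stated theorem and is not proved; as written it is an assertion of the result, not an argument. The paper closes exactly this gap by explicit computation in $\mathbb Z[\om,i]$: it first normalizes a prime above $p$ to the form $\alpha_1=-1+(2A+2B\om)(1-\om)+i(2+(C+2D\om)(1-\om))$ (itself a nontrivial case analysis using multiplication by $\om$, $i$, conjugation, and the unit $1+i\om$ --- note that the unit group of $\mathbb Z[\om,i]$ is infinite, so your plan to ``normalize by the 12th roots of unity'' is not obviously sufficient), then computes $\alpha_1\alpha_1'=-3\delta+i\rho$ to read off the $x^2+y^2$ representation of $p$ (hence $\P_1$ versus $\P_2$) from the parity of $C$, and computes $\alpha_1\alpha_1''$ to land in $\mathbb Z[\om]$ and read off the parity of $B$ in \eqref{norm1mod12} from the same $C$. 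In other words, the paper derives the second assertion of the lemma from the third; your outline attacks the second assertion directly and leaves its crux, as well as essentially all of the work for \eqref{form1} and \eqref{2ndform}, unproved. To complete your route you would need either to carry out the paper's explicit conjugate-product computations or to invoke and verify a precise reciprocity statement linking the residue of the $\mathbb Z[\om]$-prime mod $4$ to that of the $\mathbb Z[i]$-prime mod $3$.
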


\begin{proof}
As above for $p\equiv 7\bmod 12$  we can take $\alpha = -1 + 2(a+b\om)(1-\om)$. 
Note that $a$ must be odd, since otherwise 
$\alpha \equiv -1 +2b\om(1-\om)\equiv -1+2b \text{  mod  } 4$ and 
$N(\alpha)\equiv 1$ mod 4. If $b$ is also odd  then we can replace 
$\alpha$ by its conjugate $-1+2(a+b\om^2)(1-\om^2)= -1 + 2(1-\om)(a+(a-b)\om)$ 
with $(a-b)$ even.

Supposing $p\equiv 1\bmod 12$, then $p=N_2(\alpha)$ for some $\alpha 
=(a+b(1-\om))+i(c+d(1-\om))$.
We can't have $3\mid a$ and $3\mid c$, else $3\mid N(\alpha)$ and we can assume 
that $3\nmid ac$ else we replace $\alpha$ by 
$$ (1+i\om)\alpha = (a-c-3d) +(1-\om)(b+c+2d) +i( a+c+3b+(1-\om)(d-a-2b)). $$
Replacing $\alpha$ by $-\alpha$ or $\pm i \alpha$ we can assume that $a,c\equiv 
2$ mod  3 and writing $3=(1-\om)(2+\om)$ we can write
$$\alpha = -1+(1-\om)(A+B\om) + i\left(2 + (1-\om)(C+D\om)\right). $$
It remains to show that we can take $A,B,D$ all even. We work first with $A,B$. 
We have
\begin{align*}
\om ( -1+(A+B\om)(1-\om)) & = -1 + (1-B+(A-B)\om)(1-\om),\\
\om^2 ( -1+(A+B\om)(1-\om)) & = -1 + (B-A+1+(1-A)\om)(1-\om),
\end{align*}
so if $A,B$ are both odd we can replace  $\alpha$ by $\om \alpha $  and if $A$ 
is odd and $B$ even by $\om^2\alpha$. If $A$ is even and $B$ odd we write
$$ -1 + (A+B\om)(1-\om)= 2+(1-\om)(A-2+(B-1)\om).$$
Notice this does not occur if $C,D$ are both even else $2\mid \alpha$, so 
replacing $\alpha$ by a conjugate of  $-i\alpha$ we reverse the roles of the 
$A,B$ and $C,D$ and then work to make the new $A,B$ both even noting that the 
process keeps $C,D$ even
\begin{align*}
\om ( 2+(C+D\om)(1-\om)) & = 2 + (-D-2+ (C-D)\om)(1-\om),\\
\om^2 ( 2+(C+D\om)(1-\om)) & = 2 + (D-C-2+(-C-2)\om)(1-\om). 
\end{align*}
So suppose that $A,B$ are even. We can't have $C$ even, $D$ odd else $\alpha 
\equiv -1 +i \text{ mod } 2$
and $2\mid N(\alpha)$. If $C$ and $D$ are both odd then we can replace $\alpha$ 
by its conjugate
\begin{align*}  &  -1+ 2(A+B\om^2)(1-\om^2) + i 
\left(2+(C+D\om^2)(1-\om^2)\right) \\
 & \hspace{5ex} =  -1+ 2(A+(A-B)\om)(1-\om) + i\left( 
2+(C+(C-D)\om)(1-\om)\right),    \end{align*}
so we can assume that $D$ is also even.  Observing that
\begin{align*}
\alpha &=  -1+2(A+B\om)(1-\om)+i(2+(C+2D\om)(1-\om) ),\\
\alpha' &  = -1+2(A+B\om^2)(1-\om^2)+i(2+(C+2D\om^2)(1-\om^2), 
\end{align*}
have $\alpha \alpha'=-3\delta + i\rho,$ with
\begin{align*}
\delta & =1+C^2+2A+2C-2CD+4D^2-4(A^2-AB+B^2),\\
\rho  & =  -4-3C+6(2A+2AC+4BD-BC-2AD),
\end{align*}
plainly $C$ even leads to $2\mid \rho$ and an $\P_1$  representation of $p$ and 
if $C$ is odd $2\mid \delta$ and $p$ must be in $\P_2$. 
%(the representations of primes as sums of squares  being unique up to sign by uniqueness of factorisation).
With 
$$\alpha_1=-1+(2A+2B\om)(1-\om)+i\left( 2 + (C+2D\om)(1-\om)\right)$$ 
we take  $\alpha_2=(\om^2+i)\alpha_1=\delta_2+i\rho_2$ where
\begin{align*}
\delta_2 & = \left( (2B-2A-C-1) -(2A+2D)\om\right)(1-\om),\\
\rho_2 & = 1 +\left((2A+2D-C-2)+(2B-C-2)\om\right)(1-\om), 
\end{align*}
and the second form \eqref{2ndform}  is plain.
Observe that
\begin{align*}
\alpha &=  -1+2(A+B\om)(1-\om)+i(2+(C+2D\om)(1-\om) ),\\
\alpha'' & =  -1+2(A+B\om)(1-\om)-i(2+(C+2D\om)(1-\om)), 
\end{align*}
has
$$ \alpha \alpha'' = 5+ C^2(1-\om)^2 +4(1-\om)h(\om). $$
When   $p$ is in $\P_1$ and $C$ is even
$$ \alpha\alpha''=-1+2(1-\om)\Big(2+\om + 2(C/2)^2(1-\om)+2h(\om)\Big),$$
giving \eqref{norm1mod12} with $B$ odd, while  when $p$ is in $\P_2$ and $C$ is 
odd
$$ \om \alpha \alpha'' 
=-1+4(1-\om)\left(\frac{1}{4}(C^2-1)+\frac{1}{2}(C^2+1)\om +\om h(\om)\right), 
$$
giving \eqref{norm1mod12} with $B$ even.
\end{proof}

\begin{comment}

Before embarking on the proofs of our theorems, we begin comparing the Lind measure of a polynomial 
$F(x,y)=f(x)+yg(x)$ for the groups $G=D_{12}$, $Q_{12}$ and $\mathbb Z_6 \times 
\mathbb Z_2$:
$$
M_{D_{12}}(F)  = ab_1 (c d_1)^2, \;\;
M_{Q_{12}}(F)  =ab_2 (c d_2)^2, \;\;
M_{\mathbb Z_6 \times \mathbb Z_{2}}(F)  =ab_1 e_1 e_2 e_3e_4, $$
where  we have the integers
\begin{align*}
 & a:  =f(1)^2-g(1)^2,\\
&  b_1:  =f(-1)^2-g(-1)^2,\;\;\;\; b_2:=f(-1)^2+g(-1)^2, \\
 &    c:  =|f(\omega)|^2-|g(\omega)|^2,\\
& d_1  := |f(-\omega)|^2-|g(-\omega)|^2,\;\;\;\; d_2:=|f(-\omega)|^2+|g(-\omega)|^2,\\
 & e_1:=  |f(\omega)+g(\omega)|^2,     \;\; e_2:=|f(\omega)-g(\omega)|^2 ,\;\;\; 
e_1e_2=N_1(f(\om)^2-g(\om)^2),\\
 &  e_3:=  |f(-\omega)+g(-\omega)|^2,     \;\;  e_4:=|f(-\omega)-g(-\omega)|^2,\;\; \; 
e_3e_4=N_1(f(-\om)^2-g(-\om)^2).
\end{align*}
Here $f$ and $g$ come from the formulae \eqref{E-D} and \eqref{E-Q} for $M_G(F)$ for $G=D_{12}$ and $G=Q_{12}$ respectively. 
\end{comment}

We write
$$ h(x) := \frac{x^6-1}{x-1} =\sum_{j=0}^5 x^j,\;\;\; h(1)=6,\; h(-1)=h(\pm 
\om)=h(\pm w^2)=0. $$

\begin{proof}[Proof of Theorem \ref{Q12}] 
%We include $D_{12}$ with $Q_{12}$, pretty much for free, as we go along. 
As before, we let $m_t$ (e.g., $m_2,m_6$) denote an arbitrary integer 
coprime to $t$.
From the formula  \eqref{E-Q}  we know that $\S(Q_{12})$ consists of measures $M:= M(F):=M_{Q_{12}}(F)$ of the form
\be \label{E-MQ12} M  =ab (c d)^2, \ee
where, if  $F(x,y)=f(x)+yg(x)$,
\[
a:  =f(1)^2-g(1)^2, \; b:=f(-1)^2+g(-1)^2,\]
\[
c:  =|f(\omega)|^2-|g(\omega)|^2,\;
 d:=|f(-\omega)|^2+|g(-\omega)|^2.
 \]
The proof proceeds by a series of steps, followed by their proofs:

\vspace{1ex}

{\bf  \noindent Step 1.} We have $c\equiv a \bmod 3$,  $d\equiv b \bmod 3$ and
\be \label{27div} 3\nmid M   \;\;\; \hbox{ or }\;\;\; 3^3\mid M. \ee
\vspace{1ex}

Since $\om \equiv 1$ mod $(1-\om)$ in $\mathbb Z[\om]$ and $a$ and $c$ are 
integers we have $c\equiv a \bmod 3$. Likewise $d\equiv b \bmod 3$. So  
$cd\equiv ab$ mod 3 and  $M\equiv (ab)^3$ mod $3$. Hence   $3\mid 
M$ iff $3\mid ab$ and $3\mid cd$, proving Step 1.

\vspace{1ex}

{\bf  \noindent Step 2.} All odd integers of the forms $M=m_6$ and $M=3^3m_2$ belong to $\S(Q_{12})$.

\vspace{1ex}
From Step 1 we know that all odd $M$ are of one of these two forms. Conversely,
we obtain all odd integers that are $1 \bmod 4$ satisfying \eqref{27div} as follows:
\begin{align} & \label{1mod12}   M\left(1+t \:h(x) +yt\: h(x)\right) = 
1+12t, \\
 & M\left(1+x(x^3+1)+t\: h(x) +y\left((1+x^3)+ t\:h(x)\right)\right)  = 
5+12t,\label{5plus12t} \\ 
  &   M\left(1+t \; h(x) +y\left((1+x^3)+t\:h(x)\right)\right) =-3^3 
(1+4t).\label{27}
\end{align}
Swapping $f$ and $g$ in \eqref{E-Q} changes the sign, by \eqref{E-ggff}, so that these identities give us all odd integers 
satisfying \eqref{27div}. 
%For $D_{12}$ we  have $M_G(F)\equiv (acd_1)^2 \equiv 1$ mod 4. 

This deals with the odd measures, so we can assume that $M$ is even from now on. In particular, at least one of $ab$ and $cd$ is even.

\vspace{1ex}

{\bf  \noindent Step 3.}  We have $c\equiv d \bmod 2$, $f(1)\equiv f(-1)\bmod 2$, $g(1)\equiv g(-1)\bmod 2$, and $ab$ even iff $f(1)\equiv g(1)\bmod 2$.

\vspace{1ex}

{\bf  \noindent Step 4.} We have $2^4\mid M$, and $2^5\parallel M$ possible only when $c$ and $d$ are both odd, and $f(-1),g(-1)$ are both odd, or both even with $f(-1)/2$, $g(-1)/2$  both odd.

\vspace{1ex}

If $cd$ is even, then from $d\equiv c \bmod 2$ we have that $2^4\parallel (cd)^2$ or $2^6\mid (cd)^2$.
%{\bf \noindent Step 4.}  
%If $2^4\parallel M$ then $M=2^4 3^bm_6$, where $b=0$ or $b\ge 3$.
 If $f(1)$ 
and $g(1)$ are both odd then $2^3\mid a$ and $2\parallel b$.  
If both are even then,  writing 
\[
f(1)=2\alpha_1,\quad f(-1)=2\alpha_2, \quad g(1)=2\beta_1, \quad g(-1)=2\beta_2,
\]
we get
$a=4(\alpha_1^2-\beta_1^2)$ and $b=4(\alpha_2^2+\beta_2^2)$.
Hence  $2^4 \mid ab$, and  we get  $2^4\mid M$, 
with $2^5\parallel M$ only as specified.

\vspace{1ex}

{\bf  \noindent Step 5.} All $M$ with $2^4\parallel M$ with 
property \eqref{27div} are possible.

\vspace{1ex}

This is seen  using
\begin{align*}
 & M\left(1+ x^2+t \:h(x) +yt\:h(x)\right) =2^4 (1+6t), \\
% & M_G\left((1+x^2)(1+x(x^3+1))+t \:h(x) +y\left((1+x^3)(1+x^2)+ 
%t\:h(x)\right)\right)  = 2^4(5+6t), \\
 & M\left((1+x^2)+t \:h(x) +y\left((1+x^2)(1+x^3)+t\:h(x)\right)\right) =-3^3 
2^4(1+2t),
\end{align*}
recalling that we can use \eqref{E-ggff} to change the sign on the first of these.

\vspace{1ex}

{\bf  \noindent Step 6.} All $M$ with $2^6\mid M$ with 
property \eqref{27div} are possible.

\vspace{1ex}

%{\bf  \noindent Step 5.} If $2^6\mid M$ then $M=2^a 3^bm_6$, where $a\ge 6$ and $b=0$ or $b\ge 3$.
This is seen using \eqref{E-ggff} and 
\begin{align*}
 %& M_G\left((1-x)+t \: h(x) +y\left((1+x^2)(1+x^3)+t\:h(x)\right)\right) 
%&=-2^6(1+3t),\\
& M\left((1+x+x^2+x^4)+t \:h(x) +yt\;h(x)\right) =2^6(1+3t),\\
 & M\left((1+x^2)+t \:h(x) +y\left(-(1+x^3)+t\:h(x)\right)\right) =2^63^3t.
\end{align*}
So the even measures with 
$2^4\parallel M$ or $2^6\mid M$  are as claimed. 

For the rest of the proof we can assume that $2^5\parallel M$ and that $3^\beta\parallel M$. We know that $\beta=0$ or $\beta\ge 3$.

\vspace{1ex}

{\bf  \noindent Step 7.} If $\beta =4$ or $\beta \ge 6$ then $M=2^53^\beta m_6$.

\vspace{1ex}

We get  $2^53^4$ and $2^53^6$ from
 $$M\left( (1-x-x^5)+h(x) 
+y\left((x^4+x^2+1)+(x^2-1)(x+1)^{\delta}\right)\right) =2^5 3^{4+2\delta}. $$
Hence, multiplying by an odd measure, we can  
obtain  any $2^53^\beta m_6$ with  $\beta =4$ or $\beta \geq 6$. 

This just leaves  $\beta =0,3$ or $5$, which we can now assume holds.

\begin{comment}{
\begin{align*}
& M_G\left( (1-x-x^5)+(2t+1)h(x) +y\left((x^4+x^2+1)+(x^2-1)(x+1)^{\delta} +2t\: 
h(x)\right)\right) \\
 & \hspace{5ex}=2^5 3^{4+2\delta}(1+3t) \\
& M_G\left( (x^4+x^2+1)-(x^3+1)+(t+1)h(x) +y\left(x(1+x)-(x^4+x^2+1)+t 
\:h(x)\right)\right)\\
  & \hspace{3ex} =2^5 3^{7}(1+2t) \end{align*}
we have any $2^53^bm$, $(m,6)=1$ with  $b=4$ or $b\geq 6$. 
This just leaves  $b=0,3$ or $5$.}

%We can assume that $2^5\parallel M$, $3^b\parallel M$ with $b=0,3$ or $5$ from now on.
\end{comment}
\vspace{1ex}

{\bf  \noindent Step 8.} We have $3\nmid b$.

\vspace{1ex}

 For if $3\mid b=f(-1)^2+g(-1)^2$ then $3\mid f(-1),g(-1)$
with $(f(-1)/3)^2+(g(-1)/3)^2$  coprime to 3 or a multiple of $3^2$, 
giving $3^2\parallel b$ or $3^4\mid b$. Then also, from Step 1, 
$d\equiv b\equiv 0\bmod 3$,
 with $d$ contributing an even power of $3$ to $bd^2$, so that $3^4\parallel  bd^2$ or $3^6\mid bd^2$. Also, from $a\equiv c\bmod 3$ we have either 
$3\nmid a$ or $3^3\mid ac^2$.
 Hence  $3^4\parallel M$ or $3^6\mid M$, contrary to assumption. 
 %So we can assume that $3\nmid b_2= f(-1)^2+g(-1)^2$. 

\vspace{1ex}

{\bf  \noindent Step 9.} If $3\mid f(-1)g(-1)$ then $p\mid M$ for some prime $p\equiv 5\bmod 12$.

\vspace{1ex}

Supposing first that $3\mid f(-1)g(-1)$, then we have a factor of $M$ of the form 
$$f(-1)^2+g(-1)^2 \; \text{ or}\; (f(-1)/2)^2+(g(-1)/2)^2=A^2+(3B)^2=2k $$ with 
$k$ odd and $2k\equiv 1$ mod 3. Thus $k\equiv 2$ mod 3 is an odd sum of two 
squares, so must contain an odd power of a  prime $p\equiv 2$ mod 3 which  must 
be  a sum of two squares, and hence $p\equiv 5\bmod 12$. 
Conversely suppose that we have a prime $p\equiv 5\bmod 12$ then $2p$ is a sum of 
two squares $2p=a^2+b^2$ where $a$ and $b$ must be odd and one of them a 
multiple of 3, $2p=(1+6A)^2+(3+6B)^2$.

\vspace{1ex}

{\bf  \noindent Step 10.} Conversely every  $2^5p$ with $p\equiv 5\bmod 12$  is an $M$.

\vspace{1ex}
This is because if $2p=(1+6A)^2+(3+6B)^2$ then
$$ M\left( -1-Ah(-x) +h(x) + y\left( x^4+x^2+1+ Bh(-x)\right) \right)   = 
2^5p. $$

\begin{comment}
{\begin{align*}
& M_G\left( -1-A h(-x) +(2t+1)h(x) + y\left( x^4+x^2+1+Bh(-x)+2t\:h(x)\right) 
\right)  \\
 & \hspace{5ex} = 2^5p(1+3t)\\
& M_G\left( (x^5+x-1)+ B\frac{x^6-1}{x+1} + m\:h(x) + 
y\left(-1+A\frac{x^6-1}{x+1}+m\:h(x)\right) \right) \\
 & \hspace{5ex} = 2^4 3^3 pm.
\end{align*}
}
\end{comment}

\vspace{1ex}

{\bf  \noindent Step 11.} If $3\nmid f(-1)g(-1)$ then $p^2\mid M$ for some prime $p\equiv 5\bmod 6
$.

\vspace{1ex}

For if $3\nmid f(-1)g(-1)$ then 
$$d=|f(-\omega)|^2+|g(-\omega)|^2\equiv f(-1)^2+g(-1)^2 \equiv 2 \text{ mod  } 3$$
 is odd. Hence this term
must be divisible by a prime $p\equiv 5$ mod 6 and $M$ by $p^2$. 

\vspace{1ex}

 {\bf  \noindent Step 12.} Conversely every such $2^5p^2$ with $p\equiv 5\bmod 6$  is an $M$.

\vspace{1ex}

We can write a $p\equiv 5$ mod 6 as
a sum of two norms of elements in $\mathbb Z[w]$, 
$p=N_1(\alpha)+N_1(\gamma)$; we can do this since any integer not of the form 
$9^k(9n+6)$ can be represented by $x^2+xy+y^2+z^2$ -- see Dickson \cite{Dickson}
(in fact  $x^2+xy+y^2+z^2+zw+w^2$ should represents all integers  
\cite{290}). 
Moreover since $p$ is odd  one of the norms must be even and hence an 
element in $2\mathbb Z[w]$ and we can write $p=N_1(\alpha)+4N_1(\beta)$ with 
$N_1(\alpha),N_1(\beta)\equiv 1$ mod 3. By Lemma \ref{normform1}  we can assume 
that $\alpha = -1+ 2(1-\omega)(A+B\omega)$ and $\beta =-1+(C+D\omega)(1-\omega)$. We take
\begin{align*} f(x)&= -1 -(A-Bx)(x^3-1)(1+x)+ h(x) \\
g(x) & = (x^2+x+1)- (C-1-Dx)(x^3-1)(1+x). \end{align*}
Then $f(1)^2-g(1)^2=2^4$, $f(-1)^2+g(-1)^2=2,$ $|f(\omega)|^2-|g(\omega)|^2=1$, 
$|f(-\omega)|^2+|g(-\omega)|^2=N_1(\alpha)+N_1(2\beta)=p$ and $M_G(f(x)+yg(x))=2^5p^2$.

\comments{\begin{align*} f(x)&= -1 -(A-Bx)(x^3-1)(1+x)+ (2m+1)h(x) \\
g(x) & = (x^2+x+1)- (C-1-Dx)(x^3-1)(1+x)+ 2m\: h(x). \end{align*}
Then $f(1)^2-g(1)^2=2^4(1+3m)$, $f(-1)^2+g(-1)^2=2,$ $|f(\omega)|^2-|g(\omega)|^2=1$, 
$|f(-\omega)|^2+|g(-\omega)|^2=N_1(\alpha)+N_1(2\beta)=p$ and $M_G(f(x)+yg(x))=2^5mp^2$.
}

\vspace{1ex}

{\bf  \noindent Step 13.} Any $2^53^\beta km_6$, with $k=p$, $p\equiv 5 \bmod 12$,  or $k=p^2$, 
$p\equiv 5 \bmod 6$ and $\beta =0$ or $\beta \geq 3$ is an $M$.

\vspace{1ex}

Hence we can get any $2^5k$, where  $k=p$, $p\equiv 5 \bmod 12$,  or $k=p^2$, 
$p\equiv 5$ mod 6, and by  multiplicativity any $2^53^\beta km_6$, with  
$\beta =0$ or $\beta \geq 3$.
\end{proof}

\

We remark that the evaluation of $\S(D_{12})$, already known from \cite{dihedral}, can be modelled on the first six steps of the above proof for $Q_{12}$. In place of \eqref{E-MQ12}, for $G=D_{12}$  formula \eqref{E-D} gives $M = ab_1 (c d_1)^2$, with $b_1:  =f(-1)^2-g(-1)^2$ and $d_1:= |f(-\omega)|^2-|g(-\omega)|^2$. Step 1 of the proof also holds for $G=D_{12}$, because $d_1\equiv b_1\bmod 3$. Step 2 also follows using the identities \eqref{1mod12}, \eqref{5plus12t} and \eqref{27}. For $D_{12}$ swapping $f$ and $g$ is not necessary because odd $M\equiv (acd_1)^2\equiv 1\bmod 4$. In Step 3 we still have $c\equiv d_1\bmod 2$. In Step 4 we  also  have $2^3\mid b_1$ when $f(1)$ and $g(1)$ are both odd and $b_1=4(\alpha_2^2-\beta_2^2)$, when both are even, giving $2^4\parallel ab_1$ or $2^6\mid ab_1$, and so $2^4\parallel M$ or $2^6\mid M$. For Steps 5 and 6, we use the same four identities, but, without \eqref{E-ggff}
to change  signs, we need the two extra identities
\begin{align*}
& M\left((1+x^2)(1+x(x^3+1))+t \:h(x) +y\left((1+x^3)(1+x^2)+ t\:h(x)\right)\right)  = 2^4(5+6t),\\
& M\left((1-x)+t \: h(x) +y\left((1+x^2)(1+x^3)+t\:h(x)\right)\right)
=-2^6(1+3t),
\end{align*}
to complete the proof.

\begin{proof}[Proof of Theorem \ref{Z2Z6}] Suppose that $G=\mathbb Z_6\times 
\mathbb Z_2$. From \eqref{E-Z62} we have 
\[
M_G(F)  =ab_1 e_1 e_2 e_3e_4, 
\]
for the integers
\begin{align*}
 & a:  =f(1)^2-g(1)^2,\;\;\;  b_1:  =f(-1)^2-g(-1)^2,\\
 & e_1:=  |f(\omega)+g(\omega)|^2,     \;\; e_2:=|f(\omega)-g(\omega)|^2 ,\;\;\; 
e_1e_2=N_1(f(\om)^2-g(\om)^2),\\
 &  e_3:=  |f(-\omega)+g(-\omega)|^2,     \;\;  e_4:=|f(-\omega)-g(-\omega)|^2,\;\; \; 
e_3e_4=N_1(f(-\om)^2-g(-\om)^2).
\end{align*}

Since $f(\om)\equiv f(\om^2)\equiv f(1)$ mod $(1-\om)$ etc.  in $\mathbb 
Z[\om]$, we readily see that $e_1e_2\equiv a^2$ mod 3
and $e_3e_4\equiv b_1^2$ mod $3$ in $\mathbb Z$. Hence we cannot have 
$3\parallel  ab_1 e_1e_2e_3e_4$  and
\be \label{3div}  3^{\alpha} \parallel M_G(F) \Rightarrow  \alpha=0 \hbox{ or } 
\alpha \geq 2. \ee
Moreover  $3^2\parallel M_G(F)$ implies that $3\parallel ab_1$ and $3\parallel 
e_1e_2e_3e_4$.

Since $f(-x)=f(x)+2u_1(x)$ we readily see that all the $e_i\equiv e_1$ mod $2$. 
Moreover considering factorisation
in $\mathbb Z[\om]$, or by checking that $2\mid N_1(A+B\om)=A^2-AB+B^2$ only 
when  $2\mid A,B$, we see that 
a norm of an element in $\mathbb Z[\om]$ is divisible by an even power of 2. 
Hence
$$ 2^{\beta}\parallel e_1e_2e_3e_4 \Rightarrow \beta =2t, \; t=0 \hbox{ or } 
t\geq 4. $$
From the proof of Theorem \ref{Q12}, and  the comments on $D_{12}$ after it,  we have that
$$ 2^{\beta} \parallel ab_1 \Rightarrow \beta=0,4 \hbox{ or } \beta \geq 6. $$
Hence we certainly have
\be \label{2div}  2^{\beta} \parallel M_G(F) \Rightarrow \beta =0,4 \hbox{ or } 
\beta \geq 6. \ee
Similarly, since $f(-x)^2=f(x)^2+4u_2(x)$ we get $b_1\equiv a$ mod $4$ and 
$e_3e_4\equiv e_1e_2$ mod 4
and if $ab_1$ is odd then $ab_1\equiv a^2\equiv 1$ mod $4$ and if  
$e_1e_2e_3e_4$ is  odd then it is 1 mod 4.
In particular
\be \label{oddity} M_G(F) \text{ odd } \Rightarrow M_G(F)\equiv 1 \text{ mod } 
4. \ee

\noindent
{\bf Case (a)} We begin with the multiples of 27. We can achieve all odd 
multiples  satisfying \eqref{oddity} using \eqref{27}, and all even multiples 
satisfying \eqref{2div} with
\begin{align*}
& M_G\left( (x^2+1)(x^3+1)+ kh(x) + y \left( (x^2+1)+k\:h(x)\right) \right)=-2^4 
3^3(1+2k), \\
& M_G\left( 1+ m\:h(x) + y \left( (1-x-x^5) +m\:h(x)\right) \right)=-2^6 3^3m.
\end{align*}

\noindent
{\bf Case (b)} Suppose next that $3^2\parallel  M_G(F)$.
Consider first the case $e:=e_1e_2e_3e_4$ odd. From the above discussion we know 
that  $e\equiv 1$ mod 4 and $3\parallel e$. So  $e/3\equiv 3$ mod $4$,
and must be divisible by an odd power of a prime $p\equiv 3$ mod 4 with $p\neq 
3$.
Since $e$ is a norm in $\mathbb Z[\om]$ we must have $p\equiv 7\bmod 12$. 
Conversely suppose that $p\equiv 7\bmod 12$.
By Lemma \ref{normform} we have $p=N_1(\alpha_1)$ with $\alpha_1= 1-2\om 
+4(A+B\om)(1-\om)$. Take
\begin{align*} 
f(x) & = (x+1) -(A-Bx)(x^3-1)(x+1) + m\:h(x),\\
g(x) & = x -(A-Bx)(x^3-1)(x+1) + m\: h(x).
\end{align*}
Then $a=3(1+4m)$, $b_1=-1$, 
$e_1e_2=|f(\om)^2-g(\om)^2|^2=|\om-\om^2|^2=3,$
$e_3=N_1( \alpha_1)=p,$ and $ e_4=1, $
giving   $M_G(F)=-9(1+4m)p$, and hence all the odd multiples of $9p$ satisfying 
\eqref{oddity}.
Likewise
\begin{align*} 
f(x) & = (x+1) -(A-Bx)(x^3-1)(x+1) + (m-1)\:h(x),\\
g(x) & = (x^2 +1) - (A-Bx)(x^3-1)(x+1) - m\: h(x),
\end{align*}
has  $a=12(1-2m)$, $b_1=-4$, 
$e_1e_2=N_1(\om-\om^2)=3,$
$e_3= N_1(\alpha_1)$ and $e_4=1$,
and $M_G(F)=9\cdot 2^4(2m-1)p$,
 while
\begin{align*} 
f(x) & = (x+1) +(x^4+x^2+1)-(A-Bx)(x^3-1)(x+1) + (m-1)\:h(x),\\
g(x) & = x -(A-Bx)(x^3-1)(x+1) + m\:h(x).
\end{align*}
has  $a=-24m$, $b_1=8$, 
$e_1e_2=3,$ $e_3e_4=p,$
and $M_G(F)=-3^2\cdot 2^6mp$. Hence we can  achieve all the even multiples of 
$9p$ satisfying \eqref{2div}.

Suppose now that $3^2\parallel M_G(F)$ with $e=e_1e_2e_3e_4$ even, then 
$2^{8+2t}\parallel e$ and $3\parallel e$.
We can suppose that $M_G(F)$ is not divisible by a prime $p\equiv 7\bmod 12$, 
since we already have all such  multiples of these. Hence the odd powers of 
primes, other than $3$, dividing 
$e$ are all 1 mod 4 and hence $e 2^{-8-2t}3^{-1}$ is 1 mod 4. Also the odd $ab_1$  are 
1 mod 4 and the evens are odd multiples of $2^4$ or multiples of $2^6$. 
Thus  either $M_G(F)=2^8m$ or $2^{10}m$ with $m\equiv -1$ mod 4, or  
$2^{12}\parallel M_G(F)$ or $2^{14}\mid M_G(F)$. We can achieve all these:
\begin{align*} 
f(x) & = 2(x^2+1)-(x^4+x^2+1) + m\:h(x),\\
g(x) & = (x^3+1)+ m\; h(x).
\end{align*}
has  $a=-3(1+4m)$, $b_1=1$, 
$e_1e_2=N_1(4\om^2-4)=2^4\cdot 3,$ $e_3e_4=N_1(4\om^2)=2^4$
and  $M_G(F)=-3^2\cdot 2^8(1+4m)$, while
\begin{align*} 
f(x) & = (1-x^2)-2(x^3+1)+(m+1)\:h(x),\\
g(x) & = -x^4+x^2(x^3+1)+ m\: h(x),
\end{align*}
has  $a=3(1+4m)$, $b_1=-1$, 
$e_1=N_1(2(\om^2-1))=2^2\cdot 3,$ $e_2=N_1( 4\om)=2^4,$
$e_3=N_1(2)=2^2$, $e_4=N_1(-2\om^2)=2^2,$
and  $M_G(F)=-3^2\cdot 2^{10}(1+4m)$.

Taking
\begin{align*} 
f(x) & = 2(x^2+1)-h(-x)+m\: h(x),\\
g(x) & = (x^3+1)+ m\:h(x),
\end{align*}
has  $a=12(1+2m)$, $b_1=4$, 
$e_1e_2=N_1(4\om^2-4)= 2^4\cdot 3,$ $e_3e_4=N_1(4\om^2)=2^4,$
and  $M_G(F)=3^2\cdot 2^{12}(1+2m)$. Similarly
\begin{align*} 
f(x) & = 2(x^2+1)-(x^4+x^2+1)+m\;h(x),\\
g(x) & = (x^3+1)-(x^4+x^2+1)+ m\:h(x),
\end{align*}
has  $a=24m$, $b_1=-8$, 
$e_1e_2= 2^4\cdot 3,$ $e_3e_4=2^4,$
and  $M_G(F)=-3^2\cdot 2^{14}m$.

\vspace{1ex}
\noindent
{\bf Case (c)} Finally we deal with the measures coprime to 3.  We achieve any 
value $1\bmod 12$ with \eqref{1mod12}.
We can also get all odd multiples of $2^4$ or multiples of $2^6$  when  the 
multiple is 1 mod 3:
\begin{align*}
& M_G\left( (x^2+1)+ m\: h(x) + y m\: h(x) \right)=2^4(6m+1), \\
& M_G\left( (x^4+x^2+1)+ m\:h(x)+ y \left( 1+m\:h(x)\right) \right)=2^6 (3m+1).
\end{align*}
Since odd measures are 1 mod 4 and even measures are divisible by  exactly $2^4$ 
or at least $2^6$  this leaves the measures 5 mod 12 or $-2^4(6m+1)$ or 
$-2^6(3m+1)$. 

Suppose that $p\equiv 1\bmod 12$ is in $\P_1$  then, by Lemma \ref{normform}, 
$p=N_1(\alpha)$ for some
\be \label{formk}  \alpha =-1-2\om(1-\om) +4(A+B\om)(1-\om). \ee
Suppose $p=12t+5$ then $p=-1-2\om(1-\om) + 4((2t+1)+(t+1)\om)(1-\om)$ and 
$p^2=N_1(\alpha)$ for an $\alpha$ of the form
\eqref{formk} and if $p_1,p_2\equiv 7\bmod 12$ then by Lemma \ref{normform} 
$p_1p_2=N_1(\alpha)$ for some
\begin{align*} \alpha  & =\big( -1 + 2(1-\om)+4(1-\om)K_1(\om)\big)\big( 
1+2(1-\om)(1+\om)+4(1-\om)K_2(\om)\big) \\
 & = -1-2\om(1-\om) +4(1-\om)K_3(\om),
\end{align*}
which is also of the form \eqref{formk}. Hence for $k=p$ with $p$ in $\P_1$ or $k=p^2$ 
with $p\equiv 5\bmod 12$ or $k=p_1p_2$ 
with $p_1,p_2\equiv 7\bmod 12$ we can write $k=N_1(\alpha)$ for an $\alpha$ of 
the form \eqref{formk} and taking
\begin{align*}
f(x) &= x(x^2+x+1) - (A-Bx)(x^3-1)(1+x)+  m\:h(x)),\\
g(x) & = x(x+1)  - (A-Bx)(x^3-1)(1+x)+m\:h(x), 
\end{align*}
or
\begin{align*}
f(x) &= x^2(1-x^3)- (A-Bx)(x^3-1)(1+x)+  m\:h(x),\\
g(x) & = x(x+1)  - (A-Bx)(x^3-1)(1+x)+m\:h(x), 
\end{align*}
or
\begin{align*}
f(x) &= x(x^2+x+1)- (A-Bx)(x^3-1)(1+x)+  m\:h(x),\\
g(x) & = x(x+1) -(x^4+x^2+1) - (A-Bx)(x^3-1)(1+x)-m\:h(x), 
\end{align*}
we have $e_1e_2= N_1(-1)=1$, $e_3=N_1(\alpha)=k$, $e_4=N_1(-1)=1$
with, 
$$ (a,b_1)=(5+12m, 1),\;\;\; (-2^2(6m+1), 2^2),\;\; \text{ and } \;\; 
(2^3(3m+1),-2^3),$$
respectively, and $M_G(F)=(5+12m)k,$ $-2^4(6m+1)k$ and $-2^6(3m+1)k$. 

So we can achieve the \eqref{type2} and need just consider cases that do not 
contain the square of a prime 5 mod 12 or two primes 7 mod 12 or a prime in 
$\P_1$. Suppose first that  $e=e_1e_2e_3e_4$ is odd, then since it is 1 mod 4,  
it must contain only primes from $\P_2$ and squares of primes $11\bmod 12$. Since 
$e\equiv 1$ mod 3, to get a measure $2$ mod 3 we must have one of  
$a=f(1)^2-g(1)^2,$ $b_1= f(-1)^2-g(-1)^2\equiv 1$ mod 3 and the other $-1$ mod 
3.
Replacing $\pm x$, $\pm f$, $\pm g$ we can assume that 
$f(1)\equiv 1$ mod 3, $g(1)\equiv 0$ mod 3, $f(-1)\equiv 0$ mod 3, $g(-1)\equiv 
1$ mod 3. Since $f(\pm \om)\equiv f(\pm 1)$ mod $(1-\om)$, $g(\pm\om)\equiv 
g(\pm1)$ mod $(1-\om)$, we can write
\begin{align*}  f(\om)+g(\om) & =1+(1-\om)(A_1+B_1\om),\\
 f(\om)-g(\om) & =1+(1-\om)(A_2+B_2\om),\\
 f(-\om)+g(-\om) & =1+(1-\om)(A_3+B_3\om),\\
 f(-\om)-g(-\om) & =-1+(1-\om)(A_4+B_4\om),
\end{align*}
and since $f(-\om)\equiv f(\om)$ mod $2$ and $g(-\om)\equiv g(\om)$ mod 2 and 2 
is prime we readily see that the $A_i$ have the same parity and the $B_i$ all 
have the same parity. 
Multiplying by $x$ or $x^2$ as necessary we can assume that $A_1$ and $B_1$ are 
both even and hence all the $A_i$ and $B_i$ are all even. All the primes in 
$\P_2$ or $11\bmod 12$ factor in $\mathbb Z[\om]$ into $1+4(A+B\om)(1-\om)$ times 
a unit, and (however we factor) an expressions of this type  with $A_i$, $B_i$ 
even will have $A_i$, $B_i$  both a multiple of 4. 
Hence $g(\om)\equiv 0$ mod 2 and $g(-\om)\equiv 1$ mod 2 contradicting 
$g(\om)\equiv g(-\om)$ mod 2, so we have no extra measures.
This leaves $e$ even and hence $2^{8+2\ell_1}\parallel e$ and 
$2^{\ell_2}\parallel ab_1$
with $\ell_1=0,4$ or $\ell_1\geq 6$ and hence
$2^t \parallel M_G(F)$ with $t=8,10,12$ or $t\geq 14$.
We can get all the multiples of $2^{12}$ and $2^{14}$:
\begin{align*}
f(x) & = (x^2+x+1) + m\:h(x),\\
g(x) & = (x^2-x+1) + m\: h(x),
\end{align*}
and
\begin{align*}
f(x) & =( x^2+x+1)-(x^4+x^2+1) + m\: h(x),\\
g(x) & = (x^2-x+1)-(x^4+x^2+1) - m\: h(x),
\end{align*}
have $e_1e_2=N_1(-4\om^2)=2^4$, $e_3e_4=N_1(4\om^2)=2^4$ with, respectively, 
$$a=2^3(1+3m),\;\; b_1=-2^3 \;\;  \text{  and }  \;\; a=-2^2(1+6m), \; b_1=2^2, 
$$
giving $M_G(F)=-2^{14}(3m+1)$ and $-2^{12}(6m+1)$.

For $t=8$, 10 we have  $ab_1\equiv 1$ mod 4. If $e$ does not contain a prime  7 
mod 12 then
 $e=2^{\ell_2}n'$ with $\ell_2=8$ or $10$ and  $n'\equiv 1$ mod 4 and 
$M_G(F)=2^8n$ or $2^{10}n$ with $n\equiv 1$ nod 4, and all these 2 mod 3 are 
obtainable:
\begin{align*}
f(x) & =( x^2+x+1)+ m\:h(x),\\
g(x) & = (x^3+1)+  m\:h(x),
\end{align*}
has $e_1e_2=N_1(-2^2)=2^4$, $e_3=e_4=N_1(-2\om)=2^2,$ 
$a=(5+12m),$ $b_1=1$,  
and
\begin{align*}
f(x) & =-1-2x^3- m\:h(x),\\
g(x) & = -1+(x^2+x+1)+m\:h(x),
\end{align*}
has $e_1e_2=N_1(2^3)=2^6$, $e_3=N_1(-2\om)=2^2$, $e_4=N_1(2+2\om)=2^2$ with
$a=(5+12m),$ $b_1= 1, $
giving $M_G(F)=2^8(5+12m)$ and $2^{10}(5+12m)$ respectively.

If $e$ contains a prime $p\equiv 7\bmod 12$ (since it has at most one the 
remaining odd primes are in  $\P_2$ or squares of primes 11 mod 12)  we have 
$M_G(F)=2^4pn$ or $2^6pn$ with $n\equiv 1$ mod 4 and all are again achievable. 
We write
$$p=N_1(1+2(A+B\omega)(1-\omega)), \;\;\;  k(x):=-x(A-Bx)(x^3-1)(1+x),$$ 
then adding $k(x)$ to both $f(x)$ and $g(x)$  in the previous two examples, 
instead of 
$e_3=N_1(-2\om)$   we have  $e_3=N_1(-2\om-4w(A+B\omega)(1-\omega))=2^2p$, with the other 
values remaining  unchanged, and $M_G(F)=2^8(5+12m)p$ and $2^{10}(5+12m)p$.
\end{proof}

\begin{proof}[Proof of Theorem \ref{Z12}] We can write
$$ F(x)=f(x^2)+xg(x^2)\in \mathbb Z[x],\;\;\; f(x)=\sum_{j=0}^5 a_jx^j,\;\; 
g(x)=\sum_{j=0}^5 b_jx^j $$
so that 
$$M_G(F)=\prod_{j=0}^{11} F(\om_{12}^j) =a b  s_1s_2$$ 
where, as before $a=f(1)^2-g(1)^2$, $b=f(-1)^2+g(-1)^2$, and
$$ s_1:=N_1(f(\om)^2-\om g(\om)^2),\;\;s_2:=N_1(f(-\om)^2+\om g(-\om)^2).$$
Observe that $s_1\equiv a^2$ mod 3 and $s_2 \equiv b^2$ mod 3, so either 
$3\nmid M_G(F)$ or $3^2\mid M_G(F)$.

\noindent
{\bf Case (i)} Suppose that $M_G(F)$ is odd. Note that $M_G(x)=-1$ giving us 
both $\pm m$ for measures $m$. 
For the odd values we can get any integer coprime to $6$ using 
$ M_G\left(1+ m\:k(x)\right)=1+12m$ and
\[
M_G\left(\frac{x^5-1}{x-1}+ 
m\:k(x)\right)=5+12m,\;\; k(x):=\frac{x^{12}-1}{x-1}, 
\]
and any odd multiple of $27$ with
$$ M_G\left(1+x^3+x^6+t \:k(x)\right)=3^3(1+4t). $$
 This leaves  $9m,$  $2,3\nmid m.$ 
Moreover  we  cannot have $3\mid b$ unless $3\mid f(-1),g(-1)$ and $3^2\mid 
b$ and $3^3\mid M_G(F)$. So we assume that $3\nmid s_2b$
 and $3\parallel a$, $3\parallel s_1$.

Suppose first that $3\nmid f(-1)g(-1)$. Then $b\equiv 2$ mod 3 and hence is 
divisible by an odd power of some 
prime $p\equiv 2$ mod 3 which must itself be a sum of two squares, and $p\equiv 
5\bmod 12$. Conversely suppose that 
$p\equiv 5\bmod 12$ then $p$ is a sum of two squares both must $\pm 1$ mod 3 with 
one even and one odd. So 
changing signs as necessary $p=(6A+1)^2+(6B+2)^2$. Taking
\begin{align*}
f(x)= &  (x^2+1) -B(x^4+x^2+1)(x-1) + m\:h(x),    \\
g(x)= &  1 -A(x^4+x^2+1)(x-1) + m\:h(x),
\end{align*}
we have $a=3(1+4m)$, $ b= (6B+2)^2+(6A+1)^2=p$, $s_1=N_1(\om^2-\om)=3$, 
$s_2=N_1(\om^2+\om)=1$
and $M_G(F) = 9p(1+4m)$. 

Suppose now that $3\mid f(-1)g(-1)$.
Writing
\begin{align}  \label{aandalpha} u & := |f(\om)|^2+|g(\om)|^2,\;\;\; v:=\om 
f(\om)g(\om^2)+\om^2f(\om^2)g(\om), \nonumber \\
\alpha & :=|f(-\om)|^2-|g(-\om)|^2,\;\;    \beta:=  \om 
f(-\om)g(-\om^2)+\om^2f(-\om^2)g(-\om), \end{align}
then $u,v,\alpha,\beta $ are integers with
\begin{align*} ( f(\om)+\om^2g(\om))(f(\om^2)+\om g(\om^2)) & = u + v, \\
( f(\om)-\om^2g(\om))(f(\om^2)-\om g(\om^2)) & = u -v, \\
 ( f(-\om)+i\om^2g(-\om))(f(-w^2)+i\om g(-\om^2)) & = \alpha + i\beta, \\
( f(-\om)-i\om^2g(-\om))(f(-\om^2)-i\om g(-\om^2)) & = \alpha - i\beta, 
\end{align*}
and  
$$s_1  =(u+v)(u-v)=u^2-v^2,\;\;\;\; 
 s_2  =(\alpha+i\beta )(\alpha -i\beta)=\alpha^2+\beta^2.$$
Writing $f(-\om)=f(\om) +2h_1(\om)$, $f(\om)=f(1)+(1-\om)h_2(\om)$, 
$f(-\om)=f(-1)+(1-\om)h_3(\om)$ etc. and observing that since $3\parallel  a$ we 
must have $3\nmid f(1)g(1)$, we readily get
$$ u\equiv f(1)^2+g(1)^2 \equiv 2 \text{ mod } 3,\;\;\;  \alpha \equiv 
f(-1)^2-g(-1)^2 \equiv \pm 1 \text{ mod } 3, $$
$$ u\equiv \alpha \text{ mod } 2,  \;\;\; v\equiv \beta \text{ mod } 2, $$
$$ v\equiv 2f(1)g(1) \equiv \pm 1 \text{ mod } 3,\;\;\;  \beta \equiv 
2f(-1)g(-1) \equiv 0 \text{ mod } 3. $$
Suppose first that $u$ is odd. Then $v$ is even and $s_1=u^2-v^2\equiv 1$ mod 4. 
Hence, in addition to the single prime 3,  $s_1$ must
contains an odd power of a prime $p\equiv 3$ mod 4. Since it is a norm  in 
$\mathbb Z[\om],$ the prime must be 
$1$ or $7\bmod 12$ and $p\equiv 7\bmod 12$.  Suppose that $p\equiv 7\bmod 12$.  
From Lemma  \ref{normform} we have $p=N_1(\alpha)=N_1(-\alpha \om)$
where  $\alpha= -1+2(1-\om)(2A+1+2B\om)$ and
$$ -\alpha \om = 1-\om^2 +4(\om^2+A(\om^2-\om)+B(1-\om^2))= 1-\om^2 +4(C+D\om)$$
for some integers $C,D$. Taking 
\begin{align*}
f(x) & = x^3+1 +(C+Dx)(1-x)(x^3+1)+ m\:h(x),\\
g(x) & = x +x(C+Dx)(1-x)(x^3+1)+ m\: h(x),\end{align*}
gives $a=3(1+4m)$, $b=1$, $s_2=N_1(1)=1$ and $f(\om)-\om^2g(\om)= 1$ with
$$ f(\om)+\om^2g(\om)= 3+4(1-\om)(C+D\om)=(1-\om)(-\alpha \om), $$
and $s_1=3p$.
Suppose next that $u$ is even, so $\alpha \equiv \pm 1$ mod $3$ is even and 
$\beta$ is an odd multiple of $3$. Since we have produced all multiples of 
primes $5$ or $7\bmod 12$ we suppose that all the primes dividing 
$s_2=\alpha^2+\beta^2$ are 1 or 11 mod 12. Since $\alpha$ and $\beta$ are both 
non-zero we have a non-trivial factorization    in $\mathbb Z[i]$ and $s_2$ must 
contain at least
one prime $p\equiv 1\bmod 12$. Moreover we can't have all these primes in $\P_2$, 
since 
$$ (6k +  (6t+1)i)(6k'\pm (6t'+1)i)= 6k''\mp 1 + 6t''i  $$
all the possible factorisations of such an  $n$  in $\mathbb Z[i]$ would produce 
an $\alpha + i\beta$ of the form $6k\pm (6t+1)i$
not $(6k+3)+(6t+2)i$,  so at least one of the primes is in $\P_1$. Conversely 
suppose that $p\equiv 1\bmod 12$ is in $\P_1$. Then by Lemma \ref{normform} we can 
write $p=N_2(\alpha_2)$ with
$$\alpha_2 =(1+2A+2B\om)(1-\om)+i\left(1+2(C+D\om)(1-\om)\right). $$
We take
\begin{align*}
f(x) & = 1+x -(A-Bx)(1+x)(x^3-1)+ m\; h(x),\\
g(x) & = x -x(C-Dx)(1+x)(x^3-1)+ m\;h(x),\end{align*}
which gives $a=3(1+4m)$, $b=1$, $s_1=N_1(w-1)=3$ and
$$ f(-\omega)= (1-\omega)(1+2A+2B\omega), \;\;\; -w^2g(-\omega)=1+(1-\omega)(2C+2D\omega), $$
and $s_2=N_2(f(-\omega)-iw^2g(-\omega))=p$ and $M_G(F)=9(1+4m)p$.

\vspace{1ex}
\noindent
{\bf Case (ii)} Suppose that $M_G(F)$ is even. Observe, as in the proof of 
Theorem \ref{Q12}, that $ab$ is odd or a multiple of $2^4$ and since 
$s_1\equiv s_2$ mod 2,  with a norm of a $\xi+\eta w$  even only when it is a power 
of 4 we have $s_1s_2$
is odd or an odd multiple of $2^4$ or a multiple of $2^6$. 
We can achieve all odd multiples of $2^4$ and all multiples of $2^6$  that are 
coprime to 3, and multiples of $2^4$ divisible by $3^2$.
\begin{align*}
& M_G\left( 1+x^4 + m\:k(x)\right)= 2^4 (1+6m),\\
& M_G\left( (1+x^4)^2 -\;h(-x^2)+m\:k(x)\right) =2^6 (1+3m),\\
 & M_G\left( x-1 + m\;k(x)\right) = 2^4 3^2 m. \end{align*}

That just leaves $2^5m$ with $(m,6)=1$. 
Suppose first that $3\mid f(-1)g(-1)$. As  in the discussion in Theorem 
\ref{Q12} we must have  $f(-1)^2+g(-1)^2=2n$ or $8n$,  with  here $n\equiv 2$ 
mod $3$ odd. Hence $n$ is divisible by an odd power of an odd  prime $p\equiv 2$ 
mod 3 and since $p$ factors in $\mathbb Z [i]$ must also be 1 mod 4 and $p\equiv 
5$  mod 12. Conversely suppose that $p\equiv 5\bmod 12$. Then $2p$  is a sum of 
two squares
$2p=r^2 + s^2.$  Since it is $1$ mod $3$, replacing $r$ by $-r$  as necessary,  
we can assume  that $3\mid s$ and  $r\equiv 1$ mod 3 and $2p= (1-3A)^2 + (3B)^2. 
$
The choice
\begin{align*}  f(x) & =(x^2+1) +  A(x-1)(x^4+x^2+1) +m\:h(x),\\
 g(x)& = Bx(x-1)(x^4+x^2+1) +m\:h(x), 
\end{align*}
has $a=4(1+6m)$,  $s_1=s_2=N_1^2(1+\om^2)=1$, 
$b= (2-6A)^2+ (6B)^2=8p, $
and $M_G(F)=32(1+6m)p$.

Suppose that $3\nmid f(-1)g(-1)$. Defining $u,v$ and $\alpha,\beta$ as in 
\eqref{aandalpha} and observing that since $3\nmid a$ we must have $3\mid f(1)$ 
or $g(1)$ but not both, we readily get
$$ u\equiv f(1)^2+g(1)^2 \equiv 1 \text{ mod } 3,\;\;\;  \alpha \equiv 
f(-1)^2-g(-1)^2 \equiv 0 \text{ mod } 3, $$
$$ u\equiv \alpha \text{ mod } 2,  \;\;\; v\equiv \beta \text{ mod } 2 , $$
$$ v\equiv 2f(1)g(1) \equiv 0\text{ mod } 3,\;\;\;  \beta \equiv 2f(-1)g(-1) 
\equiv \pm 1 \text{ mod } 3. $$
Suppose first that $u$ is even. Since $2\nmid s_1=u^2-v^2$ we get $v$ is odd and 
$s_1\equiv 3$ mod 4  is a positive integer 
so must be divisible by an odd power of a  prime $p\equiv 3$ mod 4. Since $n$ is 
the norm of an element of $\mathbb Z[\om]$ which is a UFD the prime $p$ must 
split in $\mathbb Z[\om]$ and so $p\equiv 7\bmod 12$. Conversely suppose that 
$p\equiv 7\bmod 12$. Then we can, by Lemma \ref{normform}, write 
$p=N_1(\alpha)$ with 
$\alpha = -1+ 2(2A+1+2B\omega)(1-\omega)$.  We take
\begin{align*}
f(x) &  =(x^2+1) + (A+Bx)(x^3+1)(1-x)+ m\:h(x), \\
g(x)  & =x(1-x) + x(A+Bx)(x^3+1)(1-x)+ m\:h(x),
\end{align*}
then $a=4(1+6m)$, $b=8$,  $s_2=N_1(\om^2+\om)=1$,
with 
$$ f(\om)+\om^2g(\om)=-1+2(2A+1+2B\om)(1-\om),   \;\; f(\om)-\om^2g(\om) =  -1, 
$$
and $s_1=N_1(\alpha)=p$ and $M_G(F)=32(6m+1)p$. 

Suppose that $u$ is odd. Then $\alpha$ is an odd multiple of 3 and $\beta \equiv 
\pm 1$ mod 3  is even.
We have already obtained all multiples of primes $5$ or $7$ mod $12$ so we can 
assume that $s_2=\alpha^2+\beta^2$  contains
only  primes $1$ or $11\bmod 12$, but the primes $11\bmod 12$ do not split in 
$\mathbb Z[i]$ so we can not obtain a factorisation of $s_2$ with $\alpha$ and 
$\beta$ both non-zero from just those primes.  Thus $s_2$ contains at least one prime $p\equiv 1$ mod 
12 with at least one of these in $\P_1$.
Conversely suppose that $p$ is in $\P_1$. By Lemma \ref{normform}  there is an
$$\alpha_1=-1+2(A+B\om)(1-\om) + i(2+2(C+D\om)(1-\om)).$$
with $p=N_2(\alpha_1)$, and
\begin{align*}
f(x) & = x^2(x^2+1)- (A-Bx)(x^3-1)(1+x)+m\:h(x), \\
g(x) & = x(x^3-1)+ (C-Dx)x(x^3-1)(1+x)+m\:h(x),
\end{align*}
has  $a=4(1+6m)$,  $b=8$, $s_1=N_1(1)=1$, with
$$ f(-\om)+ i\om^2g(-\om)=   -1 +2(A+B\om)(  1-\om) + i\left(  2  
+2(C+D\om)(1-\om)    \right) =\alpha_1, $$
so that $s_2=N_2(\alpha_1)=p$ and $M_G(F)=32(6m+1)p.$
\end{proof}

\section{Proof of Theorem \ref{Z3Z3}}

 Note that $M_G(-F)=-M_G(F)$ so we take both signs for each value. In  
\cite{dilum} it was shown that the measures for $\mathbb Z_p \times \mathbb Z_p$ 
coprime to $p$ are exactly the $(p-1)$st  roots of unity mod $p^2$. For $p=3$ 
these are $\pm 1$ mod 9 achieved with
$$ M_G( \pm 1 + m(x^2+x+1)(y^2+y+1))=9m\pm 1. $$
For the multiples of 3 we note, writing $\om = e^{2\pi i/3}$,  that in $\mathbb 
Z [\om]$ 
$$ F(\om^i,\om^j)\equiv F(1,1) \text{ mod } (1-\om),   $$
in particular if $3\mid M_G(F)$  then $3\mid F(1,1)$ and $(1-\om)\mid 
F(\om^i,\om^j)$ in $\mathbb Z[\om]$ for all $i,j$, and $3(1-\omega)^8\mid M_G(F)$ and 
$3^5\mid M_G(F)$. In fact more is true. We write our polynomial in the form 
\begin{align*}  F(x,y) =(A_0+A_1(x&-1) +   A_2(x-1)^2)&\\   &+ (B_0+B_1(x-1)   + 
B_2(x-1)^2)(y-1)&\\
        &\qquad + (C_0+C_1(x-1) + C_2(x-1)^2)(y-1)^2.& 
\end{align*}
If $3\mid F(1,1)$ we  have $3\mid A_0$ and hence
$$ F(\om^i,\om^j)\equiv A_1(\om^i-1) + B_0(\om^j-1) \text{ mod } (1-\om)^2. $$
If $3\mid A_1$ or $3\mid B_0$ we have $(1-\om)^2\mid F(x,y)$ for 
$(x,y)=(1,\om),(1,\om^2)$ or $(\om,1),(\om^2,1)$ and we gain an additional 3. 
Similarly if $A_1\equiv B_0$ mod 3 we have $(1-\om)^2\mid F(x,y)$ for 
$(x,y)=(\om^2,\om),(\om,\om^2)$
and if $A_1\equiv -B_0$ mod 3 we have $(1-\om)^2\mid F(x,y)$ for 
$(x,y)=(\om,\om),(\om^2,\om^2)$. Hence we must have $3^6\mid M_G(F)$. 

Noting that $M_G(-F)=-M_G(F)$  we can obtain all multiples of $3^6$ using
\begin{align*}
& M_G(1+2x+m(x^2+x+1)(y^2+y+1)) = 3^6(1+3m), \\
& M_G(1+2x-x(y^2+y+1)+m(x^2+x+1)(y^2+y+1))  = 3^7m. \qed
\end{align*}

\end{document}